% !TEX encoding = UTF-8 Unicode
\documentclass[a4paper,12pt]{article}  
\usepackage[utf8]{inputenc}  
\usepackage[T2A]{fontenc}  
\usepackage[english]{babel}  
\usepackage[top=20mm, left=20mm, right=20mm, bottom=20mm]{geometry}  
\usepackage[pagebackref]{hyperref} 
\usepackage{sectsty}

\subsectionfont{\normalsize}

\usepackage{amsmath} 
\usepackage{amssymb} 
\usepackage{amsthm} 
\usepackage{amsfonts}
\usepackage{mathrsfs} 
\usepackage{euscript} 
\usepackage{bm, enumitem}

\usepackage{tikz}
    \usetikzlibrary{arrows.meta}
    \usetikzlibrary{calc}
    
    \tikzset{gdst/.style=
    {circle, draw=black!50, very thick, minimum height=1.2cm, inner sep=2pt, text centered, }, }

\usepackage{float, graphicx}
\graphicspath{{pictures/}}
\usepackage{color}

\newcommand{\rr}{\mathbb{R}}
\newcommand{\nn}{\mathbb{N}}
\newcommand{\zz}{\mathbb{Z}}

\newcommand{\al}{\alpha}
\newcommand{\be}{\beta}
\newcommand {\da} {\delta}
\newcommand {\Da} {\Delta}

\newcommand {\Ga} {\Gamma}

\newcommand {\sa} {\sigma}
\newcommand {\Sa} {\Sigma}

\newcommand{\bd}{{\bm{d}}}

\newcommand{\bi}{{\bm{i}}}

\newcommand{\bma}{{\bm{\alpha}}}

\newcommand{\bmb}{{\bm{\beta}}}

\newcommand{\bmg}{{\bm{\gamma}}}

\newcommand{\IN}{{\subset}}
\newcommand{\NI}{{\supset}}

\newcommand{\pr}{\mathrm{pr}}

\newcommand{\mmm}{{\setminus}}
\newcommand{\dd}{{\partial}}
\newcommand{\8}{{\infty}}
\newcommand{\0}{{\varnothing}}
\renewcommand{\leq}{\leqslant}

\newcommand{\hT}{{\hat T}}
\newcommand{\eS}{{\EuScript S}}

\DeclareMathSymbol{\sm}{\mathbin}{AMSa}{"39}

\newtheorem{thm}{\bf Theorem}
\newtheorem{cor}[thm]{\bf Corollary}
\newtheorem{lem}[thm]{\bf Lemma}
\newtheorem{prop}[thm]{\bf Proposition}
\newtheorem{dfn}{\bf Definition}
\theoremstyle{definition}
\newtheorem{rmk}{Remark}

\newcommand{\beq}{\begin{equation}}
\newcommand{\eeq}{\end{equation}}

\usepackage[affil-it]{authblk}
\title{On  the intersection of fractal cubes}
\author{
    Tetenov Andrei\thanks{e-mail: 
    \href{mailto:atet@mail.ru}{\texttt{atet@mail.ru}}}
    \quad and \quad
    Drozdov Dmitry\thanks{e-mail: 
    \href{mailto:dimalek97@yandex.ru}{\texttt{dimalek97@yandex.ru}}}}
\affil{Sobolev Institute of Mathematics, Novosibirsk, Russia}
\begin{document}

\maketitle
\noindent\rule{\textwidth}{1pt}
\begin{abstract}
Let $n\ge 2$ and let $D=\{d_1,\cdots,d_N\}\subset\{0,1,\dots,n-1\}^k$.    The set $D$ and the integer $n$  determine a system of contractions $\mathcal S=\{S_j(x)=\frac{1}{n}(x+d_j)\}_{j=1}^N$ in $R^k$, whose attractor $K$ satisfies the set equation $nK=K+D$ and is called a fractal $k$-cube of order $n$.

We consider the intersections of fractal $k$-cubes of order $n$ in $\rr^k$ and intersections of their respective opposite $l$-faces, $0\le l<k$. The main result of the paper is the theorem on the representation of such an intersection as the attractor of a graph-directed system of similarities in terms of the sets of units corresponding to these cubes and intersections of pairs of $l$-faces. As a corollary, we prove the dimension formula for the intersection and the condition of finiteness of its measure. Another corollary gives the conditions under which the intersections have the given cardinality. Applying these techniques, we obtain the conditions under which a fractal $k$-cube has the finite intersection property and the conditions under which the fractal cube is a dendrite. 
\end{abstract}
\noindent\rule{\textwidth}{1pt}

Let $n\ge 2$ and let $D=\{d_1,\cdots,d_N\}\subset\{0,1,\dots,n-1\}^k$.  We call the set $D$  a digit set.  The set  $D$ and the integer $n$  determine a system of contraction similarities $\eS=\{S_j(x)=\frac{1}{n}(x+d_j)\}_{j=1}^N$ in $\rr^k$, whose attractor $K$ satisfies the set equation
\begin{equation}\label{fractalcube}
K=\dfrac{K+D}{n}
\end{equation}
We call $K$ a \emph{fractal k-cube} of the order n. In the special cases where $k=2$ and $k=1$, we call $K$ a \emph{fractal square} and a \emph{fractal segment}, respectively.

Each fractal $k$-cube $K$  is contained in the unit cube $P^k$. With a slight abuse of terminology, we will sometimes call isometric images of a fractal $k$-cube by the same name.

%\newpage
\section{Unit $k$-cube, its faces and sections}
Every fractal $k$-cube is a subset of the unit $k$-cube $P^k$ in $\rr^k$, so we introduce some notation related to the boundary, faces, and sections of the unit cube $P^k$.\\

\subsection{ The family of faces of the unit cube.} 
We denote by $A_k$ the set $\{-1,0,1\}^k$. There is a natural one-to-one correspondence between the family of all faces of the cube $P^k$ and the set $A_k$. To establish it, we consider the center $(1/2,\ldots,1/2)$ of the cube $P^k$ which we denote by $c$.
For each vector $\bma=(\al_1,\ldots\al_k)\in A_k$ we put in correspondence the unique face $P_\bma$ of the cube $P^k$, whose center is $c_\bma=c+\bma/2$, and conversely.  The opposite vector $-\bma$ defines the face $P_{-\bma}$ with the center $c_{-\bma}=c-\bma/2$, which is parallel and opposite to $P_{\bma}$, therefore $P_{-\bma}+\bma=P_\bma$.

 \begin{figure}[H]
     \centering
\includegraphics[width=0.42\textwidth]{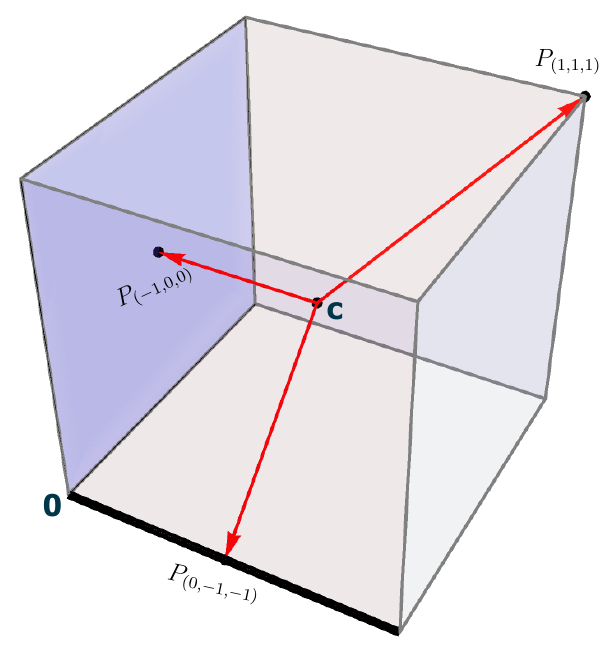}
   \hspace{0.05\textwidth}
 \includegraphics[width=0.47\textwidth]{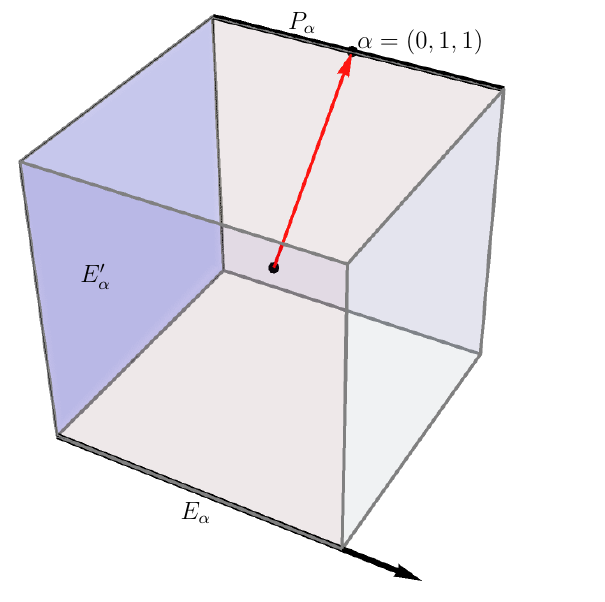}
      \caption{The faces $P_\bma$ of the unit cube.(left) and the subspaces $E_\bma$ and $E'_\bma$ (right)}
%     % \label{fig:my_label}
    
 \end{figure}

The equality $P_{-\bma}+\bma=P_\bma$ shows that the cubes $P^k$ and $P^k+\bma$ intersect on their common face.     This common face is the face $P_\bma$  of the cube $P^k$ and is the face $P_{-\bma}+\bma$ of the cube $P^k+\bma$.\\

\subsection{ The relations between faces and subspaces.} 
 For each $\bma\in A_k$  the sets $J_\bma=\{i:\al_i=0\}$  and $J'_\bma=\{i:|\al_i|= 1\}$  give rise to orthogonal complementary subspaces $E_\bma=\mathrm{span}\{e_i, i\in J_\bma\}$ and $E'_\bma=\mathrm{span}\{e_i, i\in J'_\bma\}$. It follows that $\bma\in E'_\bma$ and that the face
$P_\bma$ is parallel to the subspace $E_\bma$. We also define
$P'_\bma=P\cap E'_\bma$, which is the unique face, complementary to $P_\bma$ and containing the origin.
 We denote the orthogonal projections of $\rr^k$ to $E_\bma$ by $\pr_{\bma}$ and   to $E'_\bma$  by $\pr'_{\bma}$. \\
 
  We define $|\bma|=(|\al_1|+\ldots  +|\al_k|)$ and observe that the dimensions of the subspaces $E_\bma$ and $E '_\bma$ are equal to $j_\bma=k-|\bma|$ and $j_\bma'=|\bma|$, respectively.\\

   Notice that all faces $P_\bma$, for which the set $J_\bma$ is the same, are isometric and parallel. Since each nonzero $\al_i$ is equal to $+1$ or $-1$, there are $ 2^{|\bma|}$ different vectors $\bmb\in A_k$ for which $J_\bmb=J_\bma$. For any vector $\bmb\in A_k$ the equality $J_\bmb=J_\bma$ implies the equalities $E_\bmb=E_\bma$, $E'_\bmb=E'_\bma$, $P'_\bmb=P'_\bma$, as well as the respective equalities for the projections $\pr_\bmb,\pr'_\bmb$ and the dimensions $j_\bmb$ and $j'_\bmb$.  They all depend on $J_\bma$ only. 
 
 Whenever it is known which $\bma$  is referred to, we simply write $J,J',\pr,\pr',j$ and $j'$, omitting the subscript $\bma$.

\begin{dfn}\label{INperp}
Given $\bma,\bmb\in A_k$, we  write $\bma\sqsubseteq\bmb$ if for any $i\in J'_\bma$, 
 $\al_i=\be_i$ or equivalently, $J'_\bma\subseteq J'_\bmb$.  
 If $J'_\bma\cap J'_\bmb=\0$ we say $\bma$ and $\bmb$ are {\em complementary} and write $\bma\perp\bmb$.
 We denote by $A_\bma$ the set of all $\bmb\in A$, complementary to $\bma$.
 \end{dfn}
 
If $\bmb=\bma+\bmg$ is the sum of complementary non-null vectors, then $\bma\sqsubset\bmb$.
On the contrary, if $\bma\sqsubset\bmb$, then the vector $\bmg=\bmb-\bma$ is also the element of $A_k$, and hence
$\bmg\perp\bma$.

If $\bma\bot\bmb$, then the subspaces $E'_\bma$ and $E'_\bmb$ are orthogonal, and $E'_\bma\cap E'_\bmb =\{0\}$. Consequently, $E'_\bma\IN E_\bmb $ and $E'_\bmb\IN E_\bma$.

 If $\bma\bot\bmb$, then $P_{\bma+\bmb}=P_\bma\cap P_\bmb$.
 
A vector $\bma\in A_k$ is maximal with respect to the $\sqsubset$ relation, if and only if $J_\bma=\0$. In this case $P_\bma=\{c+\bma/2\}$ is a vertex of the cube $P^k$.

\subsection{ The faces containing the origin.}  
We denote by $P^0_{\bma}=P^k\cap E_\bma$ the face that is parallel to $P_\bma$ and contains the origin. 
The translation vector $\bma^0$ that sends $P^0_{\bma}$ to $P_\bma$  has the entries $\bma_i^0=\max\{\bma_i,0\}$ for any $i=1,\ldots  ,k$.
 The translation vector $(-\bma)^0$ sending $P^0_{\bma}$ to $P_{-\bma}$ is $(-\bma)^0=\bma^0-\bma.$ 
  Thus, $P_\bma=\bma^0 +P^0_{\bma}$ and $P_{-\bma}=\bma^0-\bma+P^0_{\bma}$.
  
 The complementary face $P'_\bma=E'_\bma\cap P$  contains the origin by its definition.

  If $\bma\bot\bmb$, then $P^0_{\bma+\bmb}=P^0_\bma\cap P^0_\bmb=P_\bma\cap P_\bmb-\bma^0-\bmb^0$.

  If $P_\bma$ is a vertex of the cube $P^k$ then $P^0_\bma=\{\bm{0}\}$ is the vertex at the origin.

   \subsection { The boundary of the unit cube $P^k$ and its faces.} 
Since $P_\bma=P^k\cap(\bma+P^k)$, the set
$\{\bma+P^k, \bma\in A\mmm \{0\}\}$ is the set of all neighbors of $P^k$ in the family $\{d+P^k, d\in \zz^k\}$, and the boundary of $P^k$ is represented by the formula \begin{equation}\label{dpk}
    \dd P^k=\bigcup\limits_{\bma\in A_k\mmm \{0\}}P_\bma
\end{equation}

%Let $\breve A$ be the set of all non-maximal elements of $A$.
 Similarly, for each $\bma\in A$  the boundary of $P_\bma$ is represented by the equation.
 
\begin{equation}\label{dpal}
    \dd P_\bma=\bigcup\limits_{\bmb\sqsupset\bma}P_{\bmb}=\bigcup\limits_{\bmg\in A_\bma\mmm\{0\}}P_{\bma+\bmg}
\end{equation}
Taking into account the equalities $P_\bma=\bma^0 +P^0_{\bma}$ and
$P_{\bma+\bmb}=P^0_{\bma+\bmb}+\bma^0+\bmb^0$, we arrive to a similar equation for $\dd P^0_\bma$:

\begin{equation}\label{dpa0}
    \dd P^0_\bma=\bigcup\limits_{\bmb\in A_\bma\mmm\{0\}}P^0_{\bma+\bmb}+\bmb^0.
\end{equation}
\\

%\newpage

\section{Projections, sections, faces and $p$-th refinement of a fractal $k$-cube $K$.}

In this section, we prove that a projection of a fractal cube $K$ to a face $P_\bma$ and the intersection $K_\bma=P_\bma\cap K$ are fractal cubes. %As a corollary, we find thatthe sections of a fractal cube with rational coordinates are also fractal cubes of the order $n^p$ for some $p\in \nn$. 
\\

\subsection{The projections and $\al$-sections of a  fractal cube.}
Let $K\IN P^k$ be a fractal $k$-cube of order $n$ with a digit set $D\IN \{0,1,\ldots,n-1\}^k$.\\ 
Given $d_i\in \{0,1,\ldots,n-1\}^k$, we denote $S_i(x)=\dfrac{x+d_i}{n}$ and $S_{i_1\ldots  i_p}=S_{i_1}\cdot\ldots  \cdot S_{i_p}$.

 The fixed point of the map $S_i$ is $\dfrac{d_i}{n-1}$. It will be denoted by $c_i$.

\begin{lem}\label{lem1}
For any $\bma\in A_k$ projection $\pr'_{\bma}(K)$ is a fractal cube with a digit set  $\pr'_{\bma}(D)$. 
\end{lem}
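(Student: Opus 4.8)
The plan is to verify the set equation \eqref{fractalcube} for the candidate system directly, and then invoke uniqueness of attractors. First I would observe that the orthogonal projection $\pr'_\bma$ is linear and commutes with the affine maps $S_j$ in the sense that $\pr'_\bma\circ S_j = \tilde S_j\circ\pr'_\bma$, where $\tilde S_j(y)=\frac1n(y+\pr'_\bma(d_j))$ acts on the subspace $E'_\bma$. Indeed, $\pr'_\bma(S_j(x))=\pr'_\bma\bigl(\tfrac1n(x+d_j)\bigr)=\tfrac1n(\pr'_\bma(x)+\pr'_\bma(d_j))$ by linearity of $\pr'_\bma$. This is the key observation and is essentially immediate; the only mild point to note is that the entries of $\pr'_\bma(d_j)$ — the coordinates of $d_j$ indexed by $J'_\bma$ — indeed lie in $\{0,1,\dots,n-1\}$, so that $\pr'_\bma(D)$ is a legitimate digit set in $E'_\bma\cong\rr^{j'_\bma}$.

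Next I would apply $\pr'_\bma$ to the set equation $K=\frac1n(K+D)$. Since $\pr'_\bma$ is linear and $\pr'_\bma(A+B)=\pr'_\bma(A)+\pr'_\bma(B)$ for any sets $A,B$, we get
\[
\pr'_\bma(K)=\pr'_\bma\!\left(\frac{K+D}{n}\right)=\frac{\pr'_\bma(K)+\pr'_\bma(D)}{n}.
\]
Setting $L=\pr'_\bma(K)$ and $\tilde D=\pr'_\bma(D)$, this says $L=\frac{L+\tilde D}{n}$, which is exactly the set equation defining the fractal cube with digit set $\tilde D$ and order $n$ in the space $E'_\bma$. Since $L$ is a nonempty compact set (continuous image of the compact set $K$) and the attractor of the IFS $\{\tilde S_j\}$ is the unique nonempty compact solution of this set equation, we conclude $L$ equals that attractor, i.e. $\pr'_\bma(K)$ is the fractal cube with digit set $\pr'_\bma(D)$.

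I do not anticipate a genuine obstacle here; the statement is essentially the functoriality of the attractor construction under a linear map that intertwines the two IFS. The one place requiring a word of care is the uniqueness step: one should note that the set equation $L=\frac{L+\tilde D}{n}$ has a unique solution among nonempty compact subsets of $E'_\bma$ by the standard Hutchinson argument (the associated operator is a contraction on the hyperspace with the Hausdorff metric), so that identifying $\pr'_\bma(K)$ as \emph{a} compact solution forces it to be \emph{the} attractor. If one prefers to avoid even that, one can instead argue directly from $\pr'_\bma\bigl(\bigcup_{|w|=p}S_w(P^k)\bigr)=\bigcup_{|w|=p}\tilde S_w(\pr'_\bma(P^k))$ and pass to the limit in the Hausdorff metric, using that these finite unions decrease to $K$ and to $L$ respectively.
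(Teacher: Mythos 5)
Your proof is correct and follows the same route as the paper: apply the linear projection $\pr'_\bma$ to the set equation $K=\frac{1}{n}(K+D)$ to get $\pr'_\bma(K)=\frac{1}{n}(\pr'_\bma(K)+\pr'_\bma(D))$, which is exactly the paper's one-line argument. The extra care you take with uniqueness of the compact solution (Hutchinson) and with checking that $\pr'_\bma(D)$ is a valid digit set is left implicit in the paper but is the same underlying reasoning.
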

\begin{proof} Obviously,
$\pr'_\bma(K)=\pr'_{\bma}\dfrac{K+D}{n}=\dfrac{\pr'_\bma(K) +\pr'_{\bma}(D)}{n}$.   
\end{proof}

\begin{prop}\label{c0inK}
Let $d_{i_1},\ldots  ,d_{i_p},d_0\in I^k$ and $c_0={d_0}/({n-1})$. Then
$c_0\in S_{i_1\ldots  i_p}(P)$ if and only if all $d_{i_j}$ are equal to $d_0$.  
 Moreover, the point $c_0$ lies in $K$ if and only if $d_0\in D$.
\end{prop}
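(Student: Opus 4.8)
The plan is to analyze the coordinates of the point $S_{i_1\ldots i_p}(x)$ for $x\in P^k$ and compare them with the coordinates of $c_0=d_0/(n-1)$. First I would write out the composition explicitly: for any $x\in P^k$,
\[
S_{i_1\ldots i_p}(x)=\frac{x}{n^p}+\sum_{j=1}^{p}\frac{d_{i_j}}{n^j}.
\]
The image $S_{i_1\ldots i_p}(P^k)$ is the cube of side $n^{-p}$ with ``lower corner'' $\sum_{j=1}^p d_{i_j}/n^j$, so $c_0\in S_{i_1\ldots i_p}(P^k)$ is equivalent to the coordinatewise inequalities
\[
0\le \frac{d_0}{n-1}-\sum_{j=1}^{p}\frac{d_{i_j}}{n^j}\le \frac1{n^p}.
\]
The key step is to show these inequalities force $d_{i_1}=\dots=d_{i_p}=d_0$. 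Writing $\frac{d_0}{n-1}=d_0\sum_{j\ge 1}n^{-j}$ and subtracting, the middle expression equals $d_0\sum_{j>p}n^{-j}+\sum_{j=1}^{p}(d_0-d_{i_j})n^{-j}$, and $d_0\sum_{j>p}n^{-j}=\frac{d_0}{(n-1)n^p}\le \frac{n-1}{(n-1)n^p}=n^{-p}$. So the condition becomes $-d_0\sum_{j>p}n^{-j}\le \sum_{j=1}^p(d_0-d_{i_j})n^{-j}\le n^{-p}-d_0\sum_{j>p}n^{-j}$, i.e. a near-$n$-adic digit argument. I expect this to be the main obstacle: one must argue coordinate by coordinate that the integers $d_0-d_{i_j}\in\{-(n-1),\dots,n-1\}$ cannot produce a value in the admissible window unless they all vanish, and do so carefully at the endpoints. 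The cleanest route is induction on $p$: the inequality $0\le \frac{d_0}{n-1}-\frac{d_{i_1}}{n}\le \text{(something)}$ together with $\frac{d_0}{n-1}-\frac{d_{i_1}}{n}=\frac1n\big(\frac{d_0}{n-1}-(d_{i_1}-d_0)\big)$ (checking the endpoint cases $d_{i_1}=d_0$ versus $d_{i_1}\ne d_0$ separately, using $0\le d_0\le n-1$) forces $d_{i_1}=d_0$ and reduces to the same statement with $p-1$ digits, since $S_{i_1}^{-1}(c_0)=c_0$ when $d_{i_1}=d_0$.

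For the ``moreover'' part, one direction is immediate: if $d_0\in D$ then $c_0$ is the fixed point $c_{i_0}$ of $S_{i_0}$ with $d_{i_0}=d_0$, and since $K$ is the attractor, $c_0=S_{i_0}^m(c_0)\in S_{i_0}^m(P^k)$ for all $m$; as $K=\bigcap_m\bigcup_{|w|=m}S_w(P^k)$ and each iterate keeps $c_0$ in the image under the word $i_0\dots i_0$, we get $c_0\in K$. Conversely, if $c_0\in K$, then for every $p$ there is a word $i_1\dots i_p$ with $c_0\in S_{i_1\ldots i_p}(K)\subseteq S_{i_1\ldots i_p}(P^k)$; by the first part all $d_{i_j}$ equal $d_0$, so in particular $d_0=d_{i_1}\in D$. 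The subtlety here is knowing that $K=\bigcap_{p}\bigcup_{|w|=p}S_w(P^k)$ and that every point of $K$ lies in some $S_w(P^k)$ with $|w|=p$; this is the standard description of an IByS attractor with $P^k$ as an invariant compact set containing $K$, which follows from $\eS(P^k)\subseteq P^k$ (each $S_j(P^k)=\frac1n(P^k+d_j)\subseteq P^k$ since $d_j\in\{0,\dots,n-1\}^k$). I would state this description as an elementary observation before the two directions, so the whole argument reduces to the coordinate estimate above.
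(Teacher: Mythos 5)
Your argument is correct, and its skeleton is the same as the paper's: reduce to the one-letter statement ($c_0\in S_{i_1}(P)$ forces $d_{i_1}=d_0$) and then strip letters inductively using the fixed-point identity $S_{d_0}(c_0)=c_0$, exactly as the paper does with $c_0=S_0^{\,j-1}(c_0)\notin S_0^{\,j-1}S_{i_j}(P)$. The only real difference is how the one-letter case is justified: the paper argues geometrically that $c_0$ lies in the relative interior (with respect to $P$) of the cell $S_0(P)$, being an interior point of the face $P_\bma$ determined by the extremal coordinates of $d_0$, so it cannot lie in any other first-level cell; you instead verify it by the coordinatewise inequality $0\le \frac{d_0}{n-1}-\frac{d_{i_1}}{n}\le\frac1n$, which after your rewriting becomes $d_{i_1}-d_0\le\frac{d_0}{n-1}\le d_{i_1}-d_0+1$ and indeed forces $d_{i_1}=d_0$ (the endpoint cases you flag do close: $d_{i_1}-d_0\ge 1$ would give $d_0=n-1$ and $d_{i_1}\ge n$, while $d_{i_1}-d_0\le-1$ would give $d_0=0$ and $d_{i_1}\le-1$). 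Your arithmetic version is slightly more computational but fully explicit at the boundary, whereas the paper's interior argument is shorter but leaves the ``relative interior'' verification to the reader; you also spell out the ``moreover'' equivalence via $K=\bigcap_p\bigcup_{|w|=p}S_w(P^k)$, which the paper's proof leaves implicit.
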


\begin{proof}
For any $i=1,\ldots  ,k$ put $\al_i=-1$ if $d_{0,i}=0$, $\al_i=1$ if $d_{0,i}=n-1$ and $\al_i=0$ otherwise.
Let $\bma=(\al_1,\ldots  ,\al_k)$. Then $c_0$ is an interior point of the face $P_\bma$ and, for that reason, it is an interior point of a cube $S_0(P)$ relative to the cube $P$. Hence, for any $d_i\neq d_0$, $c_0\notin S_i(P)$.

Thus, if  we take  the map $S_{i_1i_2\ldots  i_p}$ for which $d_{i_1}\neq d_0$, then $c_0\notin S_{i_1}(P)$ 
and consequently, $c_0\notin S_{i_1\ldots  i_p}(P)$.

 Now  suppose
 for some $j\le p$, $d_{i_1}=\ldots  =d_{i_{j-1}}=d_0$ and $d_{i_j}\neq d_0$. The previous argument shows that $c_0=S_0^j(c_0)\notin S_0^{j-1} S_{i_j}(P)=S_{i_1\ldots  i_j}(P)$ implies $c_0\notin S_{i_1\ldots  i_p}(P)$. This proves the lemma.
\end{proof}

Given $\bma\in A_k$, we denote by $\da_i$ the elements of
the set $pr'_\bma(D)\IN E'_\bma$ and by $\Da_i$ the sets ${pr'_\bma}^{-1}(\da_i)\cap D$. Take some $\da_0\in pr'_\bma(D)$ and let $\sa_0=\dfrac{\da_0}{n-1}$ denote the fixed point of the map $S_{\da_0}$. The set ${pr'_\bma}^{-1}(\sa_0)\cap P=\sa_0+P_\bma$ is the section of the cube $P$, and the set $K(\sa_0)={pr'_\bma}^{-1}(\sa_0)\cap K$ is the section of the fractal cube $K$.

\begin{prop}\label{prop:2} 
The section $K(\sa_0)=(\sa_0+P_\bma)\cap K$ is a fractal cube $K_{\Da_0}$ with a digit set $\Da_0={pr'_\bma}^{-1}(\da_0)\cap D$.
\end{prop}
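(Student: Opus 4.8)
The goal is to show that the section $K(\sigma_0)=(\sigma_0+P_\bma)\cap K$ is itself a fractal cube, specifically the fractal cube with digit set $\Da_0={pr'_\bma}^{-1}(\da_0)\cap D$ (and the appropriate order $n$). The natural strategy is to verify the set equation directly: I would show that $K(\sigma_0)$ satisfies the equation $K(\sigma_0)=\frac{K(\sigma_0)+\Da_0}{n}$ characterizing the fractal cube $K_{\Da_0}$, and then invoke uniqueness of the attractor of the corresponding IFS $\{S_d(x)=\frac{x+d}{n}\}_{d\in\Da_0}$. The key technical input is Proposition \ref{c0inK} (together with the observation in its proof that $\sigma_0$ is an interior point of the face determined by $\da_0$, hence lies in $S_d(P)$ only when $pr'_\bma(d)=\da_0$), which controls exactly which cylinder sets $S_i(P)$ can meet the section.

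First I would intersect both sides of the set equation $K=\frac{K+D}{n}$ with the affine subspace $L=(pr'_\bma)^{-1}(\sigma_0)$. On the right, $K\cap L=\bigcup_{d\in D}\big(\frac{K+d}{n}\cap L\big)$. Now $\frac{K+d}{n}\subseteq S_d(P)$, and by the argument in the proof of Proposition \ref{c0inK}, $S_d(P)\cap L\neq\0$ forces $pr'_\bma(d)=\da_0$ — because the $J'_\bma$-coordinates of any point of $S_d(P)$ lie within $1/n$ of $pr'_\bma(d)/n$ scaled appropriately, and $\sigma_0=\da_0/(n-1)$ is interior to the relevant face, so only the digits $d\in\Da_0$ contribute. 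Hence $K(\sigma_0)=\bigcup_{d\in\Da_0}\big(\frac{K+d}{n}\cap L\big)$.

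Next I would show $\frac{K+d}{n}\cap L=\frac{K(\sigma_0)+d}{n}$ for each $d\in\Da_0$, i.e. that the map $S_d$ carries the section $K(\sigma_0)$ onto the part of the next-level copy lying in $L$. This is where the condition $d\in\Da_0$ is used again: since $pr'_\bma(d)=\da_0$, a point $\frac{x+d}{n}$ (with $x\in K$) lies in $L$ iff $pr'_\bma\big(\frac{x+d}{n}\big)=\sigma_0$, which after unwinding $\sigma_0=\da_0/(n-1)$ and $\da_0=pr'_\bma(d)$ is equivalent to $pr'_\bma(x)=\da_0/(n-1)=\sigma_0$, i.e. $x\in K(\sigma_0)$. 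Thus $S_d(K(\sigma_0))=\frac{K+d}{n}\cap L$, and combining the two displays gives $K(\sigma_0)=\bigcup_{d\in\Da_0}S_d(K(\sigma_0))=\frac{K(\sigma_0)+\Da_0}{n}$. Finally, $K(\sigma_0)$ is a nonempty compact set satisfying the set equation of the IFS $\{S_d\}_{d\in\Da_0}$; by uniqueness of the attractor it equals $K_{\Da_0}$, which (after the isometric identification of the section hyperplane $\sigma_0+E_\bma$ with a copy of $\rr^{j_\bma}$) is a fractal $j_\bma$-cube of order $n$ with digit set $\Da_0={pr'_\bma}^{-1}(\da_0)\cap D$.

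**Main obstacle.** The delicate point is the "only if" direction: showing that digits $d\notin\Da_0$ genuinely contribute nothing, i.e. $S_d(P)\cap L=\0$ (or at least $S_d(K)\cap L=\0$) for such $d$. This is exactly the content already extracted in the proof of Proposition \ref{c0inK} — the section passes through the \emph{interior} of the face of $P$ singled out by $\da_0$, so a subcube $S_d(P)$ with a different $J'_\bma$-projection cannot reach it. I would need to state this separation quantitatively (distance $\geq$ something positive in the $E'_\bma$-coordinates), but it reduces to the one-dimensional fact that $\{0,1,\dots,n-1\}$ digits produce disjoint-enough subintervals away from $\da_0/(n-1)$, which is the coordinatewise version of Proposition \ref{c0inK}. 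Everything else is routine bookkeeping with the projections $pr'_\bma$ and the scaling maps $S_d$.
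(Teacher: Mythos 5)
Your argument is correct, but its skeleton differs from the paper's. The paper proves the equality by double inclusion: $K_{\Da_0}\IN K(\sa_0)$ because every digit of $\Da_0$ projects to $\da_0$, so $\pr'_\bma(K_{\Da_0})=\{\sa_0\}$; and $K(\sa_0)\IN K_{\Da_0}$ because, by Proposition~\ref{c0inK} applied to cylinders of \emph{every} depth $p$, the projection $\pr'_\bma(S_{i_1\ldots i_p}(P^k))$ can contain $\sa_0$ only if all the digits $d_{i_j}$ lie in $\Da_0$, so every address of a point of the section uses digits from $\Da_0$ only. You instead intersect the set equation $K=\frac1n(K+D)$ with the slice $L=(\pr'_\bma)^{-1}(\sa_0)$, use only the depth-one case of the separation fact to discard the digits outside $\Da_0$, check that $S_d^{-1}(L)=L$ precisely when $\pr'_\bma(d)=\da_0$ (your computation $\pr'_\bma\bigl(\frac{x+d}{n}\bigr)=\sa_0\Leftrightarrow\pr'_\bma(x)=\sa_0$ is right, since $\sa_0$ is the fixed point of $y\mapsto\frac{y+\da_0}{n}$), deduce the invariance equation $K(\sa_0)=\frac1n(K(\sa_0)+\Da_0)$, and finish by uniqueness of the attractor. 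What your route buys is a cleaner, one-level argument that never mentions addresses; what it costs is the extra appeal to the uniqueness theorem for attractors, which the paper's pointwise identification avoids. One step you should make explicit: the empty set also satisfies your derived equation, so uniqueness of the attractor applies only after checking $K(\sa_0)\neq\0$; this is immediate (for any $d\in\Da_0$ the fixed point $d/(n-1)$ of $S_d$ lies in $K$ and projects to $\sa_0$, which is in effect the paper's first inclusion), but without it the conclusion $K(\sa_0)=K_{\Da_0}$ does not follow.
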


\begin{proof}
For any $d\in \Da_0$, $pr'_\bma(d)=\da_0$, therefore $pr'_\bma(K_{\Da_0})=\sa_0$. Consequently, $K_{\Da_0}\IN K(\sa_0)$.\\

Consider now a homothety $S_\bi$
that is a composition
$S_{i_1 \ldots  i_p}$ of the maps
$S_i(x)=\dfrac{x+d_i}{n}$. By Proposition \ref{c0inK}, the set $pr'_\bma(S_\bi(P^k))$ contains the point $\sa_0$ if and only if for any $d_{i_j}$, $pr'_\bma(d_{i_j})=\da_0$, which implies $K_{\Da_0}\NI K(\sa_0)$.
\end{proof}

\subsection{ The $\al$-sections corresponding to the faces of $P^k$.} If for each $i\in J'_\bma$ the $i$-th entry $\da_{0,i}$ of the vector $\da_0\in E'_\bma$ is equal to 0 or $n-1$, then $\sa_0+P^0_\bma$ is one of the $j$-dimensional faces of $P^k$, parallel to $P_{\bma}$. 

 Each face $P_\bma$, in its turn, is the section $\sa+P^0_\bma$, for which the coefficients $\da_{i}$ of the vector $\da$ are equal to $0$ if $\al_i=-1$ and to $n-1$ if $\al_i=+1$. 
 
 Hence, the relation between $\da$ and $\bma$ for the face $P_\bma$ is expressed by the formula $\da=(n-1)\bma^0$. Summarizing the previous argument, we obtain the following theorem.

\begin{thm}\label{bma0}
For each face $P_\bma$ of the cube $P^k$, the set $K_\bma=K\cap  P_\bma$ is a fractal $k$-cube with a digit set $D_\bma=D\cap (n-1)P_\bma$.
Furthermore, the projection of $K_\bma$ to $E_\bma$ is a fractal cube $K_\bma^0=K_\bma-\bma^0$ with a digit set $D^0_\bma=\pr( D_\bma)= D_\bma-(n-1)\bma^0$.
$\square$\end{thm}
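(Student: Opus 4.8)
The strategy is to realize the face $P_\bma$ as a specific $\bma$-section of the cube $P^k$ and then invoke Proposition \ref{prop:2} with that particular choice of $\da_0$. Concretely, for the given $\bma\in A_k$, set $\da=(n-1)\bma^0\in E'_\bma$; by the computation immediately preceding the theorem, the section $\sa+P^0_\bma$ (with $\sa=\da/(n-1)=\bma^0$) is exactly the face $P_\bma$, since for $i\in J'_\bma$ the entry $\da_i$ equals $0$ when $\al_i=-1$ and $n-1$ when $\al_i=+1$, which is precisely the defining condition for $\sa+P^0_\bma$ to coincide with the face parallel to $P_\bma$ in the corresponding coordinate directions. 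Thus $P_\bma=(\sa+P_\bma)\cap P^k$ falls under the setup of Proposition \ref{prop:2}.

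Next I would apply Proposition \ref{prop:2} directly: it gives that $K(\sa)=(\sa+P_\bma)\cap K = K\cap P_\bma$ is a fractal $k$-cube $K_{\Da}$ whose digit set is $\Da={\pr'_\bma}^{-1}(\da)\cap D$. It remains to identify this digit set with $D\cap(n-1)P_\bma$. A digit $d\in D$ lies in $\Da$ iff $\pr'_\bma(d)=\da=(n-1)\bma^0$, i.e.\ iff $d_i=0$ for every $i$ with $\al_i=-1$ and $d_i=n-1$ for every $i$ with $\al_i=+1$ (no constraint on $i\in J_\bma$). Since $d\in\{0,1,\dots,n-1\}^k$ automatically, these conditions say exactly that $d/(n-1)\in P_\bma$, i.e.\ $d\in(n-1)P_\bma$. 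Hence $\Da=D\cap(n-1)P_\bma=D_\bma$, establishing the first assertion.

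For the second assertion, recall $P_\bma=\bma^0+P^0_\bma$ with $P^0_\bma=P^k\cap E_\bma$, so $\pr_\bma$ (the orthogonal projection onto $E_\bma$) restricted to $P_\bma$ is just translation by $-\bma^0$; therefore $K_\bma^0:=\pr_\bma(K_\bma)=K_\bma-\bma^0$. To see $K_\bma^0$ is a fractal cube with the claimed digit set, apply Lemma \ref{lem1} (which handles the complementary projection) in the analogous form for $\pr_\bma$, or more directly note that conjugating the IFS of $K_\bma$ by the translation $x\mapsto x-\bma^0$ turns each similarity $S_j(x)=\tfrac1n(x+d_j)$ with $d_j\in D_\bma$ into $x\mapsto\tfrac1n(x+d_j-(n-1)\bma^0)$, so the attractor $K_\bma^0$ satisfies $nK_\bma^0=K_\bma^0+D^0_\bma$ with $D^0_\bma=D_\bma-(n-1)\bma^0=\pr_\bma(D_\bma)$, the latter equality because subtracting $(n-1)\bma^0$ kills exactly the coordinates in $J'_\bma$. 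This $D^0_\bma$ lies in $\{0,\dots,n-1\}^{J_\bma}$ because for $i\in J_\bma$ the $i$-th entry is unchanged and already in range.

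The only mild subtlety—and the step I would be most careful about—is the bookkeeping that the translation by $-\bma^0$ really does send the section $\sa+P^0_\bma$ isometrically onto the face parallel to $P_\bma$ through the origin and simultaneously transforms the digit set in the stated way; this is purely a coordinate computation using $\bma_i^0=\max\{\al_i,0\}$, but one must check both the $\al_i=+1$ and $\al_i=-1$ cases and confirm the result lands in the correct coordinate hyperplane $E_\bma$. Everything else is an immediate appeal to Lemma \ref{lem1} and Proposition \ref{prop:2}.
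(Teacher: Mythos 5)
Your proposal is correct and follows essentially the same route as the paper: the paper proves Theorem \ref{bma0} precisely by identifying the face $P_\bma$ with the $\bma$-section at $\da=(n-1)\bma^0$ and invoking Proposition \ref{prop:2}, exactly as you do. Your explicit identification of ${\pr'_\bma}^{-1}\bigl((n-1)\bma^0\bigr)\cap D$ with $D\cap(n-1)P_\bma$ and the translation/conjugation computation for $K^0_\bma$ merely fill in details the paper leaves implicit, and they are correct.
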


Similarly, if $\bma\bot\bmb$ then $K_{\bma+\bmb}=K_\bma\cap K_\bmb=K\cap P_{\bma+\bmb}$ is a fractal k-cube with a digit set $D_{\bma+\bmb}=D\cap (n-1)P_{\bma+\bmb}$, and its projection to $E_{\bma+\bmb}$ is a fractal cube $K_{\bma+\bmb}^0=K_{\bma+\bmb}-(\bma^0+\bmb^0)$ whose digit set is $D^0_{\bma+\bmb}= D_{\bma+\bmb}-(n-1)({\bma^0+\bmb^0})$.

Applying formulas \eqref{dpk}, \eqref{dpal}, \eqref{dpa0} to the fractal cube $K$, we get the equalities

\begin{equation}\label{kpk}
    \dd K=\bigcup\limits_{\bma\in A\mmm \{0\}}K_\bma; \qquad
\dd K_\bma=\bigcup\limits_{\bmb\in A_\bma\mmm\{0\}}K_{\bma+\bmb};
\mbox{ \quad  and \quad }
    \dd K^0_\bma=\bigcup\limits_{\bmb\in A_\bma\mmm\{0\}}K^0_{\bma+\bmb}+\bmb^0.
\end{equation}

 \subsection{Refinement of the digit set of a fractal cube.}

\begin{dfn}Given a system $\eS=\{S_1,\ldots  ,S_N\}$ of contracting similarities, its {\em $p$-th refinement} is the system $\eS^p$ which consists of all the compositions of length $p$ of the elements of $\eS$, i.e.  $\{S_{i_1}S_{i_2}\ldots  S_{i_p}:(i_1,i_2,\ldots  ,i_p)\in\{1,\ldots  ,N\}^p\}$.\end{dfn}

In the case of a fractal cube, the maps are $S_i(x)=\dfrac{x+d_i}{n}$, where $d_i\in D$. Take some $\bi=i_1i_2\ldots\i_p\in I^p$.
If we calculate the composition $S_\bi=S_{i_1}S_{i_2}\ldots  S_{i_p}$, we obtain
\[S_\bi(x)=\dfrac{x+n^{p-1}d_{i_1}+n^{p-2}d_{i_2}+\ldots  +d_{i_p}}{n^p}.\] 

Then the equality $K=\bigcup\limits_{\bi\in I^p} S_\bi(K)$ becomes $K=\dfrac{D^{(p)}+K}{n^p}$. Here, the set $D^{(p)}$
consists of all elements $\bd_\bi=d_{i_1i_2\ldots  i_p}$, where $(i_1,i_2,\ldots  ,i_p)\in\{1,\ldots  ,N\}^p$ and $\bd_\bi= n^{p-1}d_{i_1}+n^{p-2}d_{i_2}+\ldots  +d_{i_p}$.
 
 Thus, for any $p\in \nn$, the set $K$ may be represented as a fractal cube of order $n^p$ with  the digit set $D^{(p)}$.

We call   $D^{(p)}$ the digit set
 for the  refinement $\eS^p$ of the system $\eS$.\\

 \begin{lem}\label{Dap} For each $\bma\in A_k$, the digit set for $K_\bma$ in $D^{(p)}$ is $(D_\bma)^{(p)}$.\end{lem}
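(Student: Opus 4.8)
The plan is to unwind both sides through the already-established descriptions of digit sets. By Theorem \ref{bma0}, applied to $K$ viewed as a fractal cube of order $n^p$ with digit set $D^{(p)}$, the digit set of $K_\bma$ inside $D^{(p)}$ is $D^{(p)}\cap(n^p-1)P_\bma$. On the other hand, $(D_\bma)^{(p)}$ is the order-$n^p$ digit set obtained by iterating the system $\eS_\bma$ whose digits form $D_\bma=D\cap(n-1)P_\bma$; explicitly, its elements are $n^{p-1}e_{i_1}+\dots+e_{i_p}$ with each $e_{i_j}\in D_\bma$. So the task reduces to the set identity
\[
D^{(p)}\cap(n^p-1)P_\bma \;=\; \Bigl\{\,n^{p-1}e_{1}+\dots+e_{p}\;:\;e_j\in D\cap(n-1)P_\bma\,\Bigr\}.
\]

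First I would fix $\bma$ and recall that $P_\bma$ is characterized coordinatewise: $x\in P_\bma$ iff $x_i=0$ for $i$ with $\al_i=-1$, $x_i=1$ for $i$ with $\al_i=+1$, and $x_i\in[0,1]$ for $i\in J_\bma$. Thus for a digit $d\in\{0,\dots,n-1\}^k$ the condition $d\in(n-1)P_\bma$ is purely coordinatewise: $d_i=0$ when $\al_i=-1$, $d_i=n-1$ when $\al_i=+1$, no constraint on $d_i$ for $i\in J_\bma$. Likewise $\bd_\bi=n^{p-1}d_{i_1}+\dots+d_{i_p}\in(n^p-1)P_\bma$ means, coordinatewise, that the $i$-th component equals $0$ (resp. $n^p-1$) when $\al_i=-1$ (resp. $+1$). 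The crux is then a base-$n$ digit argument: the $i$-th coordinate of $\bd_\bi$ has base-$n$ expansion $(d_{i_1,i},d_{i_2,i},\dots,d_{i_p,i})$ with each digit in $\{0,\dots,n-1\}$, so it equals $0$ iff every $d_{i_j,i}=0$, and equals $n^p-1$ iff every $d_{i_j,i}=n-1$. Hence $\bd_\bi\in(n^p-1)P_\bma$ iff $d_{i_j}\in(n-1)P_\bma$ for every $j=1,\dots,p$, which is exactly the right-hand side above.

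The only mild obstacle is bookkeeping: making sure the base-$n$ positional argument is applied per coordinate and that the "no constraint" coordinates $i\in J_\bma$ are genuinely unconstrained on both sides (they are, since neither side imposes anything there). Having matched the two descriptions of the digit set, the conclusion $(D_\bma)^{(p)}=D^{(p)}\cap(n^p-1)P_\bma$ is immediate, and this is precisely the assertion that the digit set for $K_\bma$ in $D^{(p)}$ equals $(D_\bma)^{(p)}$. The same argument applies verbatim with $\bma$ replaced by $\bma+\bmb$ for complementary $\bma\bot\bmb$, giving the analogous statement for the lower-dimensional faces $K_{\bma+\bmb}$.
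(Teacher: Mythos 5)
Your proposal is correct and follows essentially the same route as the paper: both first invoke Theorem \ref{bma0} to identify the digit set of $K_\bma$ in the refinement as $D^{(p)}\cap(n^p-1)P_\bma$, and then use a base-$n$ positional argument to show that $n^{p-1}d_{i_1}+\dots+d_{i_p}$ lies in $(n^p-1)P_\bma$ iff every $d_{i_j}\in(n-1)P_\bma$. The only cosmetic difference is that you carry out this step coordinatewise, whereas the paper phrases it via the decomposition $(n^p-1)P_\bma=\sum_{j=0}^{p-1}n^j(n-1)P_\bma$; the content is the same.
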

 \begin{proof} By Theorem \ref{bma0}, $(D^{(p)})_\bma=D^{(p)}\cap (n^p-1)P_\bma$.

 Apply the equality $(n^p-1)P_\bma=\sum\limits_{j=0}^{p-1}n^j(n-1)P_\bma$. Since for each $j$, $d_{i_j}\in\{1,2,\ldots  ,n-1\}$, the relation  
\beq(n^{p-1}d_{i_1}+n^{p-2}d_{i_2}+\ldots  +d_{i_p})\in \sum\limits_{j=0}^{p-1}n^j(n-1)P_\bma\eeq is possible if and only if for any $j$, $d_{i_j}\in D_\bma$.
 As a result, for any $\bma\in A_k$, $D^{(p)}_\bma=(D_\bma)^{(p)}$.\end{proof}

%\newpage
\section{Intersection of fractal cubes}

Let $K_1,K_2$ be fractal $k$-cubes of order $n$ with digit sets $D_1,D_2$ and put $F_0=K_1\cap K_2$.\\

To understand the structure of $F_0$ we need to take into account all possible intersections $F_\bma=K_1\cap (K_2+\bma)$, 
where $\bma\in A_k$  and to establish relations between all these sets.\\

\subsection{ The sets $F_\al$ and related digit sets.}

The $\bma$- and $(-\bma)$- faces  of the fractal cubes $K_1$ and $K_2$   are  $K_{1,\bma}=K_1\cap  P_\bma$ and $K_{2,-\bma}=K_2\cap  P_{-\bma}$ respectively. Therefore, $F_\bma$ can be represented as the intersection $K_{1,\bma}\cap (K_{2,-\bma}+\bma)$.\\
  The digit set for the set $K_{1,\bma}$ is $D_{1,\bma}= D_1\cap(n-1)P_\bma$   and the digit set for the set $K_{2,-\bma}$ is $D_{2,-\bma}= D_2\cap(n-1)P_{-\bma}$. Consequently, the fractal cube $(K_{2,-\bma}+\bma)$ has the digit set $D_2\cap(n-1)P_{-\bma}+(n-1)\bma$.

   \begin{prop}\label{falfa}
   Let $K_1,K_2$ be fractal $k$-cubes of order $n$ with digit sets $D_1,D_2$ and let $\bma\in A_k$. The set $F_\bma$ is the intersection of fractal cubes $\hat K_1,\hat K_2$, whose digit sets are $\hat D_1=D_1\cap(n-1)P_\bma$ and $\hat D_{2}=D_2\cap(n-1)P_{-\bma}+(n-1)\bma$ respectively.
   
   Furthermore, for any $\bmg\perp\bma$, the set $F_{\bma+\bmg}$ is the intersection $\hat K_{1}\cap(\hat K_{2}+\bmg)$.
    \end{prop}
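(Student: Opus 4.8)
The plan is to establish the first assertion directly, by intersecting with the common face $P_\bma$ and invoking Theorem~\ref{bma0}, and then to deduce the second assertion by applying the first one twice and matching digit sets.

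For the first assertion I would begin by noting that $F_\bma=K_1\cap(K_2+\bma)$ is contained in $P^k\cap(P^k+\bma)=P_\bma$, since $K_1\IN P^k$ and $K_2+\bma\IN P^k+\bma$. Hence, using $P_{-\bma}+\bma=P_\bma$,
\[
K_{1,\bma}\cap(K_{2,-\bma}+\bma)=(K_1\cap P_\bma)\cap\bigl((K_2+\bma)\cap P_\bma\bigr)=F_\bma\cap P_\bma=F_\bma .
\]
By Theorem~\ref{bma0}, $K_{1,\bma}$ is the fractal cube with digit set $\hat D_1=D_1\cap(n-1)P_\bma$, while $K_{2,-\bma}$ is the fractal cube with digit set $D_2\cap(n-1)P_{-\bma}$; translating its set equation $nK_{2,-\bma}=K_{2,-\bma}+\bigl(D_2\cap(n-1)P_{-\bma}\bigr)$ by $\bma$ shows that $K_{2,-\bma}+\bma$ is the fractal cube with digit set $\hat D_2=D_2\cap(n-1)P_{-\bma}+(n-1)\bma$. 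Putting $\hat K_1=K_{1,\bma}$ and $\hat K_2=K_{2,-\bma}+\bma$ finishes this part; it merely organizes the computation already made just before the statement.

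For the second assertion, the case $\bmg=0$ is the first assertion, so I would assume $\bmg\neq 0$, in which case $\bma\perp\bmg$ forces $\bma+\bmg\in A_k$. On one hand, applying the first assertion to the vector $\bma+\bmg$ realizes $F_{\bma+\bmg}$ as the intersection of the fractal cubes with digit sets $D_1\cap(n-1)P_{\bma+\bmg}$ and $D_2\cap(n-1)P_{-(\bma+\bmg)}+(n-1)(\bma+\bmg)$. On the other hand, applying the first assertion to the pair $(\hat K_1,\hat K_2)$ and the vector $\bmg$ realizes $\hat K_1\cap(\hat K_2+\bmg)$ as the intersection of the fractal cubes with digit sets $\hat D_1\cap(n-1)P_\bmg$ and $\hat D_2\cap(n-1)P_{-\bmg}+(n-1)\bmg$. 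Since a fractal cube, and hence the intersection of two fractal cubes of the same order, is determined by the digit sets, it suffices to check that these two pairs of digit sets coincide. For the first member of each pair this is immediate: multiplication by $n-1$ commutes with intersection and $P_{\bma+\bmg}=P_\bma\cap P_\bmg$ because $\bma\perp\bmg$, so $\hat D_1\cap(n-1)P_\bmg=D_1\cap(n-1)P_\bma\cap(n-1)P_\bmg=D_1\cap(n-1)P_{\bma+\bmg}$. For the second member I would argue that, since $J'_\bma\cap J'_\bmg=\0$, adding $(n-1)\bma$ changes only the coordinates indexed by $J'_\bma$ whereas the condition of lying in $(n-1)P_{-\bmg}$ constrains only the coordinates indexed by $J'_\bmg$; these two operations therefore commute, and combined with $P_{-\bma}\cap P_{-\bmg}=P_{-(\bma+\bmg)}$ they give $\hat D_2\cap(n-1)P_{-\bmg}=\bigl(D_2\cap(n-1)P_{-(\bma+\bmg)}\bigr)+(n-1)\bma$, after which adding $(n-1)\bmg$ produces exactly $D_2\cap(n-1)P_{-(\bma+\bmg)}+(n-1)(\bma+\bmg)$.

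The one step that needs real care — and the main, though modest, obstacle — is this last commutation: one has to be sure that translating the digit set by $(n-1)\bma$ does not disturb the face condition carving out $(n-1)P_{-\bmg}$, and this is precisely the point at which the complementarity hypothesis $\bma\perp\bmg$ (equivalently, disjointness of $J'_\bma$ and $J'_\bmg$) is used. Everything else reduces to routine bookkeeping with the set equation~\eqref{fractalcube} and Theorem~\ref{bma0}.
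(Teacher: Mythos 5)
Your proposal is correct and follows essentially the same route as the paper: the first claim is exactly the digit-set computation carried out just before the statement (via Theorem~\ref{bma0} and translating the set equation by $\bma$), and the second claim is verified, as in the paper, by checking that $D_1\cap(n-1)P_{\bma+\bmg}=\hat D_1\cap(n-1)P_\bmg$ and $D_2\cap(n-1)P_{-(\bma+\bmg)}+(n-1)(\bma+\bmg)=\hat D_2\cap(n-1)P_{-\bmg}+(n-1)\bmg$ using $P_{\bma+\bmg}=P_\bma\cap P_\bmg$. Your extra care with the commutation of the translation by $(n-1)\bma$ and the intersection with $(n-1)P_{-\bmg}$ (valid because $J'_\bma\cap J'_\bmg=\0$) only makes explicit a step the paper asserts without comment.
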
  
    \begin{proof} The first statement is already proved above. Let us check the second one.
        
    If $\bmb=\bma+\bmg $, then $\bmb\sqsupset\bma$.  The set $F_\bmb$ is the intersection of fractal cubes with digit sets $D_{1,\bmb}=D_1\cap(n-1)P_\bmb$ and $D_{2,-\bmb}=D_2\cap(n-1)P_{-\bmb}+(n-1)\bmb$. Since $\bmb=\bma+\bmg$ and $P_\bmb=P_\bma\cap P_\bmg$, we have $D_{1,\bmb}=\hat D_{1}\cap(n-1)P_\bmg$ and $D_{2,\bmb}=\hat D_{2}\cap(n-1)P_{-\bmg}+(n-1)\bmg$.
   \end{proof}

   \begin{rmk}\label{galfa}  The second statement of Proposition \ref{falfa} shows that the intersection of fractal cubes $\hat K_1$ and $\hat K_2$ and all their faces  may be considered independently of initial sets $K_1, K_2$ as well.\\ The intersection $G_\bma$ of
   the digit sets  $D_1\cap(n-1)P_\bma$ and $D_2\cap(n-1)P_{-\bma}+(n-1)\bma$ is equal to $G_\bma=D_1\cap(D_2+(n-1)\bma)$, and is naturally associated with the set $F_\bma$.
   If $\bma=0$, the set $G_\bma$ becomes $G_0=D_1\cap D_2$.\end{rmk}
 
 \begin{rmk}\label{qbma}
  Let $Q_\bma$ denote a fractal cube  with a digit set $G_\bma$. It satisfies the equations $Q_\bma=\frac{1}{n}(Q_\bma+G_\bma)$, and $\dim_H(Q_\bma)=\log_n\#G_\bma$.   \end{rmk}

  \subsection{The intersection theorem and the graph $\Gamma_\Sa$.}
 
 The following theorem establishes the relations between the sets $F_\bma$:

%

 %\newpage
 
 \begin{thm}\label{IFC}
The family $\{F_\bma, \bma\in A_k\}$ of intersections $F_\bma =K_{1}\cap (K_{2}+\bma)$  
 satisfies the system $\Sa$ of equations
\begin{equation}\label{perall}
 F_\bma=\bigcup\limits_{\bmb\sqsupseteq{\bma}}T_{\bma\bmb}(F_\bmb),\qquad \bma\in A_k,\end{equation}
 
 where for any $\bmb\sqsupseteq\bma$, 
 \beq\label{Gab}T_{\bma\bmb}(F_\bmb)=\frac{1}{n}(F_\bmb+G_{\bma\bmb})\mbox{\quad  and \quad}
  G_{\bma\bmb}=D_1\cap(D_2+n\bma-\bmb)\eeq 
\end{thm}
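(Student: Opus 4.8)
The plan is to unwind the definitions and reduce everything to the set equation \eqref{fractalcube} applied to $K_1$ and $K_2$. Fix $\bma\in A_k$. Writing out the defining equations, $nK_1 = K_1 + D_1$ and $nK_2 = K_2 + D_2$, so that $n(K_2+\bma) = K_2 + D_2 + n\bma$. Hence a point $x$ lies in $F_\bma$ iff $nx \in (K_1+D_1)\cap(K_2+D_2+n\bma)$, i.e. iff there exist $d_1\in D_1$, $d_2\in D_2$ with $nx - d_1 \in K_1$ and $nx - d_2 - n\bma \in K_2$, and moreover $nx - d_1 = nx - d_2 - n\bma$ forces $d_1 = d_2 + n\bma$. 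Set $y = nx - d_1$; then $y\in K_1$ and $y + (d_1 - n\bma) \in K_2$, i.e. $y\in K_1\cap(K_2 + (n\bma - d_1))$. The key point is to recognise $n\bma - d_1$ as $\bmb$ for a suitable $\bmb\sqsupseteq\bma$: since $d_1 \in \{0,\dots,n-1\}^k$, each coordinate of $n\bma - d_1$ is $-d_{1,i}\in\{-(n-1),\dots,0\}$ when $\al_i=0$, is $n - d_{1,i}\in\{1,\dots,n\}$ when $\al_i=1$, and is $-n-d_{1,i}$ when $\al_i=-1$. So $y\in K_1\cap(K_2+w)$ for $w = n\bma - d_1$, but $K_2\subset P^k$ and $K_1\subset P^k$ mean this intersection is empty unless $w\in A_k$, which (checking coordinates) happens exactly when $d_{1,i}=0$ on the $\al_i=1$ coordinates, $d_{1,i}=n-1$ on the $\al_i=-1$ coordinates — i.e. $d_1\in (n-1)P_\bma$ — and then $w=\bmb$ with $\be_i = \al_i$ on $J'_\bma$ and $\be_i = -d_{1,i}/1\in\{-1,0\}$... wait, more carefully, the surviving coordinates give $\be_i\in\{-1,0,1\}$, with $\be_i=\al_i$ for $i\in J'_\bma$, so $\bma\sqsubseteq\bmb$; set $\bmg = \bmb-\bma\perp\bma$.

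Carrying this out in both directions gives the set-theoretic identity
\[
F_\bma \;=\; \bigcup_{\bmb\sqsupseteq\bma}\ \frac1n\Bigl(\bigl(D_1\cap(D_2+n\bma-\bmb)\bigr) + F_\bmb\Bigr),
\]
which is exactly \eqref{perall} once one reads off that the translations appearing are precisely $\frac1n(x+d_1)$ with $d_1\in G_{\bma\bmb} = D_1\cap(D_2+n\bma-\bmb)$, and $F_\bmb = K_1\cap(K_2+\bmb)$. One direction ($\supseteq$) is the easy one: given $\bmb\sqsupseteq\bma$, $d_1\in G_{\bma\bmb}$ and $y\in F_\bmb$, the point $x = (y+d_1)/n$ visibly lies in $S_{d_1}(K_1)\subset K_1$ and in $\frac1n(K_2 + \bmb + d_1) = \frac1n(K_2 + n\bma) = K_2+\bma$ using $d_1 - \bmb = -n\bma+\bmb-\bmb$... i.e. $d_1 = n\bma - \bmb + d_2$ for some $d_2\in D_2$ with $y+d_1 = y + d_2 + n\bma - \bmb$, hence $x\in\frac1n(y+d_2) + \bma \in K_2 + \bma$. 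The other direction ($\subseteq$) is the computation sketched in the first paragraph; the only thing to be careful about is that a candidate $d_1\in D_1$ contributes a \emph{nonempty} set only when $n\bma - d_1$ lands in $A_k$ and is $\sqsupseteq\bma$, which is where the constraint $d_1\in D_1\cap(n-1)P_\bma$ (equivalently $d_1\in G_{\bma\bmb}$ for the resulting $\bmb$) enters, and this is consistent with Proposition~\ref{falfa} and Remark~\ref{galfa}.

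The main obstacle — really the only subtle point — is the bookkeeping identifying which $\bmb$ arises from a given $(d_1,d_2)$ pair and checking that $G_{\bma\bmb} = D_1\cap(D_2+n\bma-\bmb)$ is nonempty precisely when there is something to intersect. I would handle this by the coordinate-by-coordinate case analysis on $\al_i\in\{-1,0,1\}$ indicated above: on coordinates with $\al_i=\pm1$ the inclusion $F_\bma\subset P_\bma$ (from $K_{1,\bma} = K_1\cap P_\bma$, Theorem~\ref{bma0}) pins down $d_{1,i}$ and $d_{2,i}$, so $\be_i = \al_i$ there; on coordinates with $\al_i=0$ we get $\be_i = d_{2,i}-d_{1,i}\in\{-1,0,1\}$ forced by $K_1,K_2\subset P^k$, and $\be_i\ne 0$ only when the two cubes meet along a face in that coordinate. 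Everything else is a direct substitution into \eqref{fractalcube}. It is worth remarking that \eqref{perall} for $\bma=0$ already exhibits $F_0 = K_1\cap K_2$ as the root of the graph-directed system, with the self-loop term $\bmb=0$ giving $G_0 = D_1\cap D_2$ as in Remark~\ref{galfa}.
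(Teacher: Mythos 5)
Your overall strategy is the same as the paper's: rescale by $n$, write $F_\bma=\frac1n\bigl((K_1+D_1)\cap(K_2+D_2+n\bma)\bigr)$, classify the pairwise intersections $(K_1+d_1)\cap(K_2+d_2+n\bma)$ by their offset vector, and read off $G_{\bma\bmb}$. But the execution of the inclusion $F_\bma\IN\bigcup T_{\bma\bmb}(F_\bmb)$ has a genuine error at the central step. From $nx-d_1\in K_1$ and $nx-d_2-n\bma\in K_2$ you assert $nx-d_1=nx-d_2-n\bma$, hence $d_1=d_2+n\bma$; this is unjustified (the two witnesses are in general \emph{different} points, one in $K_1$ and one in $K_2$) and is in fact impossible for $\bma\neq 0$, since $d_1,d_2\in\{0,\dots,n-1\}^k$ while $n\bma$ has entries $\pm n$. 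The slip propagates: the relevant offset is $\bmb=d_2-d_1+n\bma$, not $n\bma-d_1$, and with your vector the coordinate analysis collapses (for $\al_i=-1$ the entry $-n-d_{1,i}$ can never lie in $\{-1,0,1\}$), while the constraints you then state ($d_{1,i}=0$ where $\al_i=1$, $d_{1,i}=n-1$ where $\al_i=-1$) describe $(n-1)P_{-\bma}$ rather than $(n-1)P_\bma$. The correct constraint, forced by $|d_{2,i}-d_{1,i}|\le n-1$, is $d_{1,i}=n-1$, $d_{2,i}=0$ where $\al_i=1$ and $d_{1,i}=0$, $d_{2,i}=n-1$ where $\al_i=-1$; this gives $\be_i=\al_i$ on $J'_\bma$ and $\be_i=d_{2,i}-d_{1,i}$ on $J_\bma$, hence $\bmb\sqsupseteq\bma$ and $d_1\in D_1\cap(D_2+n\bma-\bmb)=G_{\bma\bmb}$, which is exactly how the paper argues.

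Your final paragraph does state this correct coordinatewise bookkeeping, but it contradicts your first paragraph, so the written argument is internally inconsistent: the reader cannot tell which offset is actually being used, and the identification of $G_{\bma\bmb}$ rests on the wrong one. The reverse inclusion is right in spirit but its algebra is also garbled: $\frac1n(K_2+\bmb+d_1)$ is not equal to $\frac1n(K_2+n\bma)$; what you need is that $d_1\in G_{\bma\bmb}$ means $d_1+\bmb\in D_2+n\bma$, so $\frac1n(F_\bmb+d_1)\IN\frac1n(K_2+\bmb+d_1)\IN\frac1n(K_2+D_2+n\bma)=K_2+\bma$, and $\frac1n(F_\bmb+d_1)\IN\frac1n(K_1+D_1)=K_1$. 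With the offset corrected to $d_2-d_1+n\bma$ throughout, your proof coincides with the paper's.
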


\begin{proof}
 We represent $F_\bma$ as
$K_1\cap (K_2+\bma)=\dfrac{1}{n}\bigl((K_1+D_1)\cap (K_2+D_2+n\bma)\bigr).$ \\

Given $d_1\in D_1$ and $d_2\in D_2$, the intersection $(K_{1}+d_1)\cap (K_{2}+d_2+n\bma)$ is nonempty if $(P+d_1)\cap (P+d_2+n\bma)\neq\0$, which means that the vector $\bmb=d_2-d_1+n\bma$ is an element of $A$. Since, for any coordinate $i=1,\ldots  ,k$, $|(d_2-d_1)_i|\le n-1$, this is possible only if $\bmb\sqsupseteq \bma$.\\

If $\bmb=\bma$, then $d_1=d_2+(n-1)\bma$, consequently $d_1\in D_1\cap(D_2+(n-1)\bma)= G_\bma$.

If $\bmb\sqsupset\bma$, then $(K_{1}+d_1)\cap (K_{2}+d_2+n\bma)=(K_1\cap (K_2+\bmb))+d_1$, and $d_1\in D_1\cap(D_2+n\bma-\bmb)$, which we denote by $G_{\bma\bmb}$. 

Notice that $G_{\bma\bma}=D_1\cap(D_2+n\bma-\bma)$, thereby $G_{\bma\bma}=G_{\bma}$.

As a result, we obtain $F_\bma=\frac{1}{n}\bigcup\limits_{\bmb\sqsupseteq{\bma}}(F_\bmb+G_{\bma\bmb})=
\frac{1}{n}(F_\bma+G_\bma)\cup\bigcup\limits_{\bmb\sqsupset{\bma}}\frac{1}{n}(F_\bmb+G_{\bma\bmb})$.
\end{proof}\bigskip

Theorem \ref{IFC} gives rise to the {\em structure graph $\Ga_\Sa$} of the system
$\Sa$ defined in \eqref{perall}. We use this system and graph as a tool to find the properties of the sets $F_\bma$.

\begin{dfn}The structure graph $\Ga_\Sa$ is a directed graph, in which the vertices are all non-empty sets $F_\bma$ and for each $\bma\sqsubseteq\bmb$ there is an edge $(F_\bma,F_\bmb)$ that is directed from $F_\bma$ to $F_\bmb$ and corresponds to the operator $T_{\bma\bmb}$, if this operator is non-degenerate.\end{dfn}
 In general, the graph $\Ga_\Sa$ would contain $3^k$ vertices and $5^k$ edges, and $3^k$ of these edges are loops from $F_\bma$ to itself.  We mark each edge with $G_{\bma\bmb}$.\\

However, some of the vertices and edges in the graph $\Ga_\Sa$ may vanish. This occurs for those $F_\bma$ that are empty and for those edges $(F_\bma,F_\bmb)$ for which $T_{\bma\bmb}(F_\bmb)=\0$.\\

Note the following obvious statement: 
\begin{equation}\label{Tempty}T_{\bma\bmb}(F_\bmb)=\0\mbox{\quad   if  \quad  }G_{\bma\bmb}=\0\mbox{ \quad   or \quad   }F_\bmb=\0
\end{equation} 

A set $F_\bma$ is empty if $G_\bma=\0$ and for any $\bmb\sqsupset\bma$ the set $F_\bmb+G_{\bma\bmb}=\0$. Applying \eqref{Tempty} to all $\bmb\sqsupset\bma$, we
deduce the following emptiness condition for $F_\bma$:

\begin{lem}
A set $F_\bma=\0$ if and only if for any $\bmb\sqsupseteq\bma$ and for any finite sequence\\ $\bma=\bma_0\sqsubseteq\bma_1\sqsubseteq\ldots \bma_{p-1}\sqsubseteq\bma_p=\bmb$ the product
$\#G_{\bma_0\bma_1}\#G_{\bma_1\bma_2}\ldots  \#G_{\bma_{p-1}\bma_p}\#G_{\bmb}$ is equal to zero. \hfill$\square$\end{lem}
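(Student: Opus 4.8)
The plan is to prove the emptiness criterion by iterating the reduction already recorded in~\eqref{Tempty} and tracking, along every chain of $\sqsubseteq$-edges out of $F_\bma$, which "building blocks" $G_{\bma_{j}\bma_{j+1}}$ and which terminal loop $G_\bmb$ can possibly contribute a nonempty set. The starting point is equation~\eqref{perall} of Theorem~\ref{IFC}: $F_\bma=\bigcup_{\bmb\sqsupseteq\bma}\frac1n(F_\bmb+G_{\bma\bmb})$, where the $\bmb=\bma$ term is $\frac1n(F_\bma+G_\bma)$. The point I will exploit is that the partial order $\sqsubseteq$ restricted to vectors $\sqsupseteq\bma$ has a maximal element reached in at most $j_\bma=k-|\bma|$ steps (a vector with $J=\0$), so the recursion in~\eqref{perall}, although it contains the self-loop at $\bma$, \emph{decreases a well-founded quantity whenever one follows a strict edge} $\bma\sqsubset\bmb$. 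This is what makes the iteration terminate.

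First I will show the "if" direction: assume that for every $\bmb\sqsupseteq\bma$ and every chain $\bma=\bma_0\sqsubseteq\bma_1\sqsubseteq\cdots\sqsubseteq\bma_p=\bmb$ the product $\#G_{\bma_0\bma_1}\cdots\#G_{\bma_{p-1}\bma_p}\#G_\bmb$ vanishes, and deduce $F_\bma=\0$. I will argue by induction on the height $h(\bma)=j_\bma$, i.e.\ the length of the longest strict $\sqsubseteq$-chain starting at $\bma$. In the base case $h(\bma)=0$ we have $J_\bma=\0$, so the only $\bmb\sqsupseteq\bma$ is $\bma$ itself, and~\eqref{perall} reads $F_\bma=\frac1n(F_\bma+G_\bma)$; the chain hypothesis with $p=0$ forces $\#G_\bma=0$, i.e.\ $G_\bma=\0$, and then $F_\bma$ is the attractor of the empty system, hence empty. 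For the inductive step, consider $\bma$ with $h(\bma)>0$. Every $\bmb\sqsupset\bma$ has $h(\bmb)<h(\bma)$, and the chain hypothesis for $\bma$ restricts to the corresponding hypothesis for $\bmb$ (prepend the edge $\bma\sqsubseteq\bmb$ and invoke the factor $\#G_{\bma\bmb}$; any chain from $\bmb$ with all relevant products zero, multiplied by $\#G_{\bma\bmb}$, is a chain from $\bma$, so the products from $\bmb$ vanish once we cancel the common factor — more carefully, for a fixed $\bmb$ either $\#G_{\bma\bmb}=0$, in which case $T_{\bma\bmb}(F_\bmb)=\0$ by~\eqref{Tempty} regardless of $F_\bmb$, or $\#G_{\bma\bmb}\neq0$, in which case every chain from $\bmb$ has vanishing product, so by induction $F_\bmb=\0$ and again $T_{\bma\bmb}(F_\bmb)=\0$). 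Hence all terms with $\bmb\sqsupset\bma$ in~\eqref{perall} are empty, leaving $F_\bma=\frac1n(F_\bma+G_\bma)$; the chain hypothesis with $p=0$ (the trivial chain $\bma=\bma$) gives $\#G_\bma=0$, so $F_\bma$ satisfies the contraction equation with empty digit set and therefore $F_\bma=\0$.

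For the converse "only if" direction, I will prove the contrapositive: if some chain $\bma=\bma_0\sqsubseteq\cdots\sqsubseteq\bma_p=\bmb$ has $\#G_{\bma_0\bma_1}\cdots\#G_{\bma_{p-1}\bma_p}\#G_\bmb\neq0$, then $F_\bma\neq\0$. All factors being nonzero means each $G_{\bma_{j}\bma_{j+1}}\neq\0$ and $G_\bmb\neq\0$. First, $G_\bmb\neq\0$ together with $F_\bmb=\frac1n(F_\bmb+G_\bmb)\cup(\text{terms }\bmb'\sqsupset\bmb)$ shows $F_\bmb\supseteq\frac1n(F_\bmb+G_\bmb)\neq\0$ — concretely, for $g\in G_\bmb$ the fixed point $g/(n-1)$ of the map $x\mapsto(x+g)/n$ lies in $F_\bmb$ by the same fixed-point argument used in Proposition~\ref{c0inK} (the attractor of any nonempty subsystem is nonempty). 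Then I climb back down the chain: from $\bma_{j+1}$ to $\bma_j$, nonemptiness of $F_{\bma_{j+1}}$ and of $G_{\bma_j\bma_{j+1}}$ gives, via the corresponding term $T_{\bma_j\bma_{j+1}}(F_{\bma_{j+1}})=\frac1n(F_{\bma_{j+1}}+G_{\bma_j\bma_{j+1}})\neq\0$ in~\eqref{perall} for $F_{\bma_j}$, that $F_{\bma_j}\neq\0$. After $p$ steps we reach $F_{\bma_0}=F_\bma\neq\0$, completing the proof.

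The main obstacle is bookkeeping rather than depth: one must be careful that the self-loop $\bmb=\bma$ term in~\eqref{perall} does not obstruct the induction, which is handled by noting that this term contributes nothing new precisely when $G_\bma=\0$ and otherwise already gives nonemptiness; and one must phrase the induction on the correct well-founded quantity $h(\bma)=j_\bma$ (equivalently the length of the longest strict $\sqsubseteq$-chain out of $\bma$), using that strict edges strictly decrease it. A secondary subtlety is matching the indexing convention: in the statement the product includes the factor $\#G_\bmb$ where $G_\bmb=G_{\bmb\bmb}$, so the "chain" of length $p$ really carries $p+1$ factors, and one should record that the trivial chain $p=0$ yields the single factor $\#G_\bma$, recovering the condition $G_\bma=\0$ used at the leaves.
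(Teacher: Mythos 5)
Your proof is correct and takes essentially the route the paper intends: the paper states this lemma without proof, merely remarking that one applies \eqref{Tempty} to all $\bmb\sqsupset\bma$ in \eqref{perall}. Your well-founded induction on $k-|\bma|$ for the ``if'' direction, and the fixed-point observation that $g/(n-1)\in F_\bmb$ for $g\in G_\bmb$ (as in Proposition \ref{c0inK}) followed by climbing back down a chain with nonvanishing product for the converse, supply exactly the details the paper leaves implicit.
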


For these reasons, due to the reduction of all empty vertices and empty edges, the structure graph $\Ga$ for the system $\Sa$ defined in Theorem \ref{IFC} has the set of vertices $V_\Sa=\{F_\bma: \bma\in A, F_\bma\neq\0\}$ and the set of edges
$E_\Sa=\{(F_\bma, F_\bmb): \bma\sqsubseteq\bmb,  G_{\bma\bmb}\neq\0, F_\bmb\neq\0\}$. \\

In general, such graph $\Ga_\Sa$ may be disconnected. \\

We say that two vertices $F_\bma, F_\bmb,\bma\sqsubset\bmb $ {\em are connected by a directed path in} $\Ga_\Sa$, if there is a finite sequence $\bma=\bma_0\sqsubset\bma_1\sqsubset\ldots \bma_{p-1}\sqsubset\bma_p=\bmb$ such that for any $j=0,\ldots  ,p$ sets $F_{\bma_j}\neq\0$  and sets $G_{\bma_{j-1}\bma_j}\neq\0$ for $j=1,\ldots  ,p$.\\

 We write $\bmb\succ\bma$ if there is a directed path in $\Ga$ from $F_\bma$ to $F_\bmb$.
 
If $\bmb\succcurlyeq\bma$ or $\bma\succcurlyeq\bmb$ then we say that $\bma$ and $\bmb$ are $\Ga${\em-comparable}.

We denote by $\Ga_\bma$ a subgraph in $\Ga$, whose vertices are all $F_\bmb$ such that $\bmb\succcurlyeq\bma$. We say that
$\bmb$ is {\em maximal} for $\Ga_\bma$, if $\Ga_\bmb$ is a single vertex $F_\bmb$. We say that
$\bmb$ is {\em minimal} for $\Ga_\Sa$, if there is no $\bma$ such that $\bma\prec\bmb$.

It should be noted that, by Proposition \ref{falfa}, the graph $\Ga_\bma$ shows the set of all equations that completely define each of the sets $F_\bmb$, for which $\bmb\succcurlyeq\bma$.\\

Each operator $T_{\bma\bmb}$ that corresponds to some edge in the graph $\Ga_\Sa$ is a non-empty finite union of homotheties, therefore it preserves the dimension, i.e. for any subset $X\IN F_\bmb$, $\dim_H(T_{\bma\bmb}(X))=\dim_H(X)$.\\

\subsection{Example 1. Intersection of two fractal squares consisting of 24 points} Consider the intersection of fractal squares $K_1$ and $K_2$ of order 6 with digit sets $D_1$ and $D_2$. In the figure below left, we represent the digit sets by the sets of boxes $T_1(P)=\dfrac{D_1+P}{6}$ (red) and $T_2(P)=\dfrac{D_2+P}{6}$ (blue). On the right, one can see the fractal squares $K_1$ and $K_2$. \\

Most of the sets $G_\bma$, namely, $G_0,G_{(1,0)},G_{(-1,0)},G_{(0,1)},G_{(0,-1)},G_{(1,1)},G_{(1,-1)},G_{(-1,1)}$ are empty and only $G_{(-1,-1)}=(0,0)$. Consequently, 
$F_{(1,1)}=F_{(1,-1)}=F_{(-1,1)}=\0$ and $F_{(-1,-1)}=\{(0,0)\}$.\\

\begin{figure}[H]
    \centering
    \resizebox{0.35\textwidth}{!}{\begin{tikzpicture}
    \fill[red!75]\foreach \a in
        {(0,0), (2,0), (4,0), (2,1), (4,1), (0,2), (1,2), (3,2), (2,3), (4,3), (0,4), (1,4), (3,4)} {\a rectangle +(1,1)};
    \fill[blue!75]\foreach \a in 
        {(1,1), (2,2), (3,3), (4,4), (5,5), (3,1), (5,1), (4,2), (1,3), (5,3), (2,4), (1,5), (3,5)} {\a rectangle +(1,1)};
    \draw[gray, line width=0.6mm] (0,0) grid (6,6);
    \draw[black, line width=0.6mm] (0,0) rectangle (6,6);
\end{tikzpicture}}\hspace{0.15\textwidth}
\includegraphics[width=0.35\textwidth]{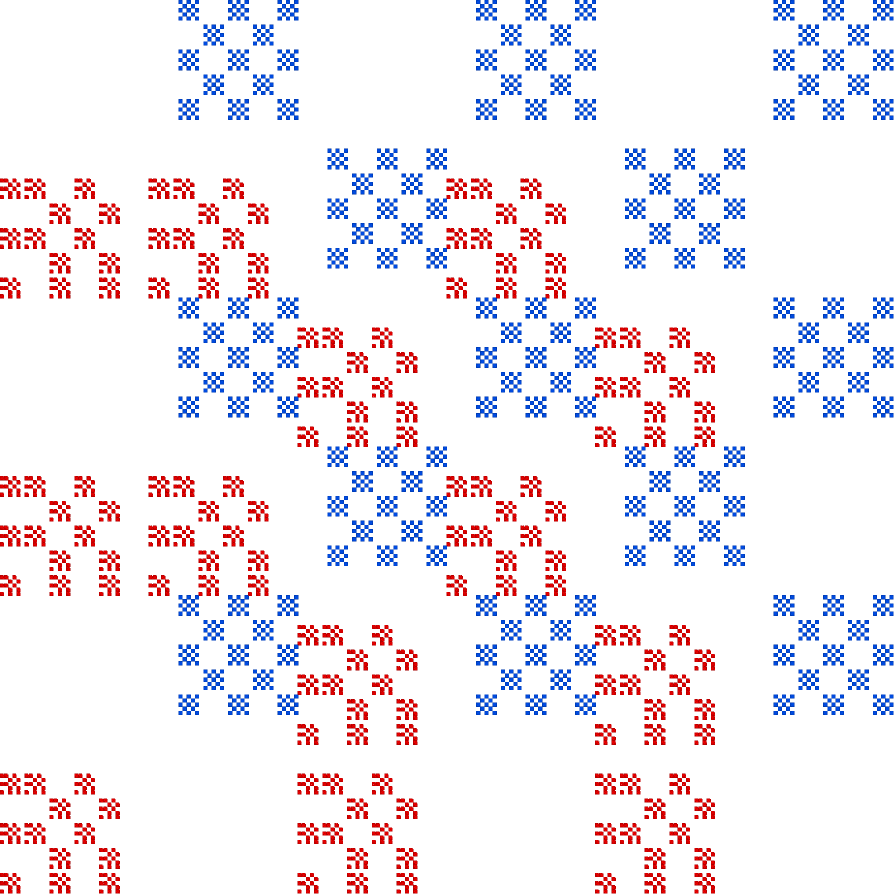}
     \caption{The digit set diagram for $D_1$ and $D_2$ (left) and the fractal squares $K_1$ and $K_2$ (right).}
    % \label{fig:my_label}
    
\end{figure}

The set $F_{(1,0)}$ is empty because $G_{(1,0)},F_{(1,1)}$ and $F_{(1,-1)}$ are empty. For the same reason, $F_{(0,1)}=\0$. \\

The set $G_{(-1,0)(-1,-1)}=\{(0,2),(0,4)\}$ and $G_{0(-1,0)}=\{(1,2),(1,4),(2,3),(3,2),(3,4),(4,3)\}$.
         The sets $G_{(0,-1)(-1,-1)}$ and $G_{0(0,-1)}$ are obtained from the previous two sets by toggling $x$ and $y$.\\

Thus, after the reduction of empty vertices and edges, 
the graph $\Ga_\Sa$ contains four vertices $F_0,F_{(-1,0)},F_{(0,-1)},F_{(-1,-1)}$ and four edges, corresponding to $G_{(-1,0)(-1,-1)}, G_{(0,-1)(-1,-1)},G_{0(-1,0)}$ and $G_{0(0,-1)}$.

The calculation using formula \eqref{perall} shows that
$\#F_{(-1,0)}=\#F_{(0,-1)}=2$ and $\#F_0=2 \#G_{0(-1,0)}+2\#G_{0(0,-1)}=24.$

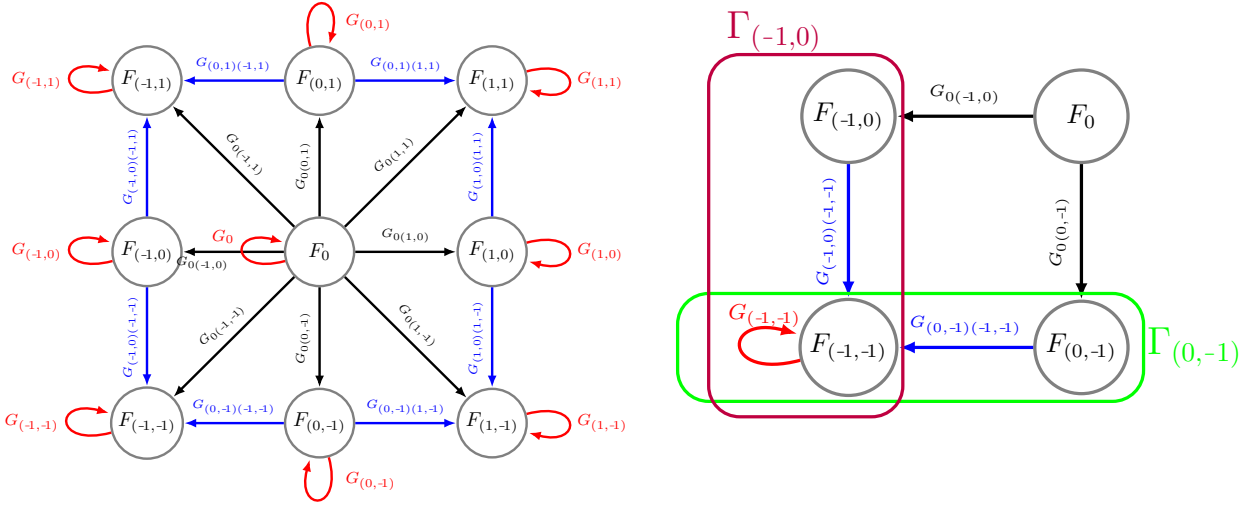
\begin{figure}[h!]
    \centering
\resizebox{.5\textwidth}{!}{
\begin{tikzpicture}
    \node[gdst, shift={(0,0)}] (f1){\footnotesize$F_{(\sm\!1,\sm\!1)}$};
    \node[gdst, shift={(3,0)}] (f2){\footnotesize$F_{(0,\sm\!1)}$};
    \node[gdst, shift={(6,0)}] (f3){\footnotesize$F_{(1,\sm\!1)}$};
    \node[gdst, shift={(0,3)}] (f4){\footnotesize$F_{(\sm\!1,0)}$};
    \node[gdst, shift={(3,3)}] (f5){\footnotesize$F_{0}$};
    \node[gdst, shift={(6,3)}] (f6){\footnotesize$F_{(1,0)}$};
    \node[gdst, shift={(0,6)}] (f7){\footnotesize$F_{(\sm\!1,1)}$};
    \node[gdst, shift={(3,6)}] (f8){\footnotesize$F_{(0,1)}$};
    \node[gdst, shift={(6,6)}] (f9){\footnotesize$F_{(1,1)}$};
    \path[<-,>={Latex[length=6pt]}, very thick] 
        (f1) edge[blue] node[above] {\tiny$G_{(0,\sm\!1)(\sm\!1,\sm\!1)}$} (f2)
             edge[blue] node[above, rotate=90] {\tiny$G_{(\sm\!1,0)(\sm\!1,\sm\!1)}$} (f4)
             edge node[above, rotate=45] {\tiny$G_{0(\sm\!1,\sm\!1)}$} (f5)
             edge[loop left, red] node[left] {\scriptsize$G_{(\sm\!1,\sm\!1)}$} (f1)
        (f3) edge[blue] node[above] {\tiny$G_{(0,\sm\!1)(1,\sm\!1)}$} (f2)
             edge[blue] node[above, rotate=90] {\tiny$G_{(1,0)(1,\sm\!1)}$}(f6)
             edge node[above, rotate=-45] {\tiny$G_{0(1,\sm\!1)}$} (f5)
             edge[loop right, red] node[right] {\scriptsize$G_{(1,\sm\!1)}$} (f3)
        (f7) edge[blue] node[above, rotate=90] {\tiny$G_{(\sm\!1,0)(\sm\!1,1)}$} (f4)
             edge node[above, rotate=-45] {\tiny$G_{0(\sm\!1,1)}$} (f5)
             edge[blue] node[above] {\tiny$G_{(0,1)(\sm\!1,1)}$} (f8)
             edge[loop left, red] node[left] {\scriptsize$G_{(\sm\!1,1)}$} (f7)
        (f9) edge[blue] node[above] {\tiny$G_{(0,1)(1,1)}$} (f8)
             edge node[above, rotate=45] {\tiny$G_{0(1,1)}$} (f5)
             edge[blue] node[above, rotate=90] {\tiny$G_{(1,0)(1,1)}$} (f6)
             edge[loop right, red] node[right] {\scriptsize$G_{(1,1)}$} (f9)
        (f2) edge node[above, rotate=90] {\tiny$G_{0(0,\sm\!1)}$} (f5)
             edge[loop below, red] node[above right] {\scriptsize$\quad G_{(0,\sm\!1)}$} (f2)
        (f4) edge node[below left=-0.8mm] {\tiny$G_{0(\sm\!1,0)}$} (f5)
             edge[loop left, red] node[left] {\scriptsize$G_{(\sm\!1,0)}$} (f4)
        (f6) edge node[above] {\tiny$G_{0(1,0)}$} (f5)
             edge[loop right, red] node[right] {\scriptsize$G_{(1,0)}$} (f6)
        (f8) edge node[above, rotate=90] {\tiny$G_{0(0,1)}$} (f5)
             edge[loop above, red] node[below right] {\scriptsize$\quad G_{(0,1)}$}(f8)
        (f5) edge[loop left, red] node[above left] {\scriptsize$G_{0}$} (f5);
\end{tikzpicture}}\hspace{.02\textwidth}
\resizebox{0.45\textwidth}{!}{\raisebox{3em}
{\begin{tikzpicture}
    \node[gdst, shift={(0,0)}] (f1){\footnotesize$F_{(\sm\!1,\sm\!1)}$};
    \node[gdst, shift={(3,0)}] (f2){\footnotesize$F_{(0,\sm\!1)}$};
    \node[gdst, shift={(0,3)}] (f4){\footnotesize$F_{(\sm\!1,0)}$};
    \node[gdst, shift={(3,3)}] (f5){\footnotesize$F_{0}$};
    \path[<-,>={Latex[length=6pt]}, very thick] 
        (f1) edge[blue] node[above] {\tiny$G_{(0,\sm\!1)(\sm\!1,\sm\!1)}$} (f2) 
             edge[blue] node[above, rotate=90] {\tiny$G_{(\sm\!1,0)(\sm\!1,\sm\!1)}$} (f4) 
             edge[loop left, red] node[above=0.1cm] {\scriptsize$\qquad G_{(\sm\!1,\sm\!1)}$} (f1)
        (f2) edge node[above, rotate=90] {\tiny$G_{0(0,\sm\!1)}$} (f5)
        (f4) edge node[above] {\tiny$G_{0(\sm\!1,0)}$} (f5);
    \draw[green, very thick, rounded corners=4mm] (-2.2,-0.7)rectangle(3.8,0.7);
    \draw[purple, very thick, rounded corners=4mm] (-1.8,-0.9)rectangle(0.7,3.8);
    \node[shift={(3.7,0)}, right, green]{\large$\Gamma_{(0,\sm1)}$};
    \node[shift={(-.2,4.1)}, left, purple]{\large$\Gamma_{(\sm1,0)}$};
\end{tikzpicture}}}

\caption{The maximal possible structure graph $\Ga_\Sa$ for the intersection of two fractal squares (on the left) and the graph $\Ga_\Sa$ for Example 1 (on the right). The picture on the right shows the subgraphs $\Ga_{(-1,0)}$ and $\Ga_{(0,-1)}$. }
\end{figure} 

\subsection{Intersections of the pieces of a fractal cube $K$.}

In the case when  $K_1=K_2=K$, the sets $F_\bma$ are the intersections of opposite faces $K_\bma$ and $K_{-\bma}$ of the same fractal cube $K$. If $\bma=0$, then $F_0=K$, otherwise $F_\bma=F_{-\bma}$ for any $\bma\neq 0$. Direct computation shows that
for any $\bma$, $D_\bma=D\cap(D+(n-1)\bma)$ implies that $D_{-\bma}=D_\bma-(n-1)\bma$. Similarly,
$D_{-\bma-\bmb}=D\cap(D-n\bma+\bmb)=D_{\bma\bmb}-n\bma+\bmb$.

\subsection{Finiteness condition for the measure $H^s(F_\al)$.}
\begin{dfn}\label{nubma}
 For any $\bma\in A_k$ we put  $\nu(\bma)=\max\{\#G_\bmb : \bmb\succcurlyeq\bma\}$ and $s(\bma)=\log_n\nu(\bma)$.   
\end{dfn}
On account of Remark \ref{qbma}, we may also write
$s(\bma)=\max\limits_{\bmb\succcurlyeq\bma}\{\dim_HQ_\bmb  \}$.
\begin{thm}\label{dimthm}
For any $\bma\in A_k$, the Hausdorff dimension  of the set $F_\bma$ is $s(\bma)$, and its $s(\bma)$-measure $H^{s(\bma)}(F_\bma)$ is positive.\\ The measure
$H^{s(\bma)}(F_\bma)$ is finite if and only if the set $\{\bmb\succcurlyeq\bma : \#G_\bmb=\nu(\bma)\}$ does not contain $\Ga$-comparable elements.
\end{thm}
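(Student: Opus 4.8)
The plan is to treat the system $\Sa$ from Theorem~\ref{IFC} as a graph-directed system of similarities associated to the graph $\Ga_\bma$, and to read off both the dimension value and the measure finiteness condition from the structure of that graph. First I would observe that, by Proposition~\ref{falfa} together with the reduction of empty vertices and edges, the sets $\{F_\bmb : \bmb\succcurlyeq\bma\}$ form exactly the attractor of the graph-directed IFS carried by $\Ga_\bma$, all of whose edge maps $T_{\bmb\bmg}$ are finite unions of homotheties with ratio $1/n$ (the ratios being governed by the cardinalities $\#G_{\bmb\bmg}$). Since every vertex $F_\bmb$ of $\Ga_\bma$ carries a loop with coefficient set $G_\bmb$ and these loops are the only cycles whose contribution can dominate, the open set condition (realised here by the disjointness of the cubes $(P+d)$ for distinct digits in each $G_{\bmb\bmg}$, as in the proof of Theorem~\ref{IFC}) holds for the refined system.

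Next I would establish the dimension and positivity of the $s(\bma)$-measure. For the lower bound, note that $Q_\bmc \IN F_\bmc$ for the $\bmc\succcurlyeq\bma$ attaining $\#G_\bmc=\nu(\bma)$, via iterating the loop $T_{\bmc\bmc}$; since $\dim_H Q_\bmc=\log_n\#G_\bmc=s(\bma)$ by Remark~\ref{qbma}, and the dimension-preserving operators $T_{\bmb\bmg}$ of Theorem~\ref{IFC} map a copy of $Q_\bmc$ into $F_\bma$, we get $\dim_H F_\bma\ge s(\bma)$ and $H^{s(\bma)}(F_\bma)>0$. For the upper bound, I would iterate the system \eqref{perall} $p$ times: $F_\bma$ is covered by $n^{-p}$-sized images of the $F_\bmb$, the number of pieces landing in a fixed $F_\bmb$ being the sum over directed paths $\bma=\bma_0\sqsubset\cdots\sqsubset\bma_p=\bmb$ of $\prod_j\#G_{\bma_{j-1}\bma_j}$, and each such product is at most $\nu(\bma)^p=n^{ps(\bma)}$ by the definition of $\nu$. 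Hence the $p$-th covering sum for $H^{s(\bma)}$ stays bounded (times the number of vertices of $\Ga_\bma$), giving $\dim_H F_\bma\le s(\bma)$ and $H^{s(\bma)}(F_\bma)<\infty$ \emph{in the absence of excess growth}.

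The sharp finiteness criterion is where the real work lies. Set $M=\{\bmb\succcurlyeq\bma : \#G_\bmb=\nu(\bma)\}$. If $M$ contains two $\Ga$-comparable elements $\bmc\prec\bmc'$, then iterating the two loops and the connecting directed path produces, at level $p$, at least $c\cdot p\cdot n^{ps(\bma)}$ essentially disjoint pieces of diameter $\sim n^{-p}$ inside $F_\bma$ (the factor $p$ from the choice of where along the path of length $p$ one switches from cycling at $\bmc$ to cycling at $\bmc'$); summing $p\cdot n^{ps(\bma)}\cdot n^{-ps(\bma)}=p\to\infty$ forces $H^{s(\bma)}(F_\bma)=\infty$. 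Conversely, if no two elements of $M$ are $\Ga$-comparable, I would bound the level-$p$ covering sum by splitting each directed path according to which maximal-cardinality vertices it visits: along any single path a vertex of $M$ can be visited only in one contiguous block (by incomparability no two distinct ones occur, and within a block the loop at that vertex contributes $\nu(\bma)$ per step), while every step outside such a block contributes a factor $\le \nu(\bma)-1$ or passes through a non-loop edge, so the total product is $\le \nu(\bma)^p$ and moreover the \emph{number} of paths with product close to $\nu(\bma)^p$ is bounded independently of $p$; this keeps the covering sum bounded and yields $H^{s(\bma)}(F_\bma)<\infty$.

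I expect the main obstacle to be this last estimate — precisely, showing that $\Ga$-incomparability of the maximal vertices suffices to prevent the "polynomial-in-$p$" blow-up of the number of near-extremal paths. The delicate point is that $G$-cardinalities on non-loop edges can themselves be large (up to $\nu(\bma)$), so one must argue that a directed path cannot exploit more than one "full-rate" stretch; here the combinatorial structure of $\sqsubset$ (a strictly increasing chain in $A_k$ has length $\le k$, so only boundedly many distinct vertices appear on any path, the rest being loop-repetitions) is the key that turns the incomparability hypothesis into a genuine bound. Organising this into a clean counting lemma — perhaps phrased as: the generating function $\sum_p (\text{level-}p\text{ sum}) z^p$ is rational with its pole at $z=1/\nu(\bma)$ simple iff $M$ is an antichain — is the part that will require care rather than routine calculation.
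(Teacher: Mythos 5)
There is a genuine gap in your ``only if'' (infinite measure) direction. Exhibiting $c\,p\,n^{p s(\bma)}$ ``essentially disjoint'' pieces of diameter $\sim n^{-p}$ and summing $(\mathrm{diam})^{s(\bma)}$ proves nothing about $H^{s(\bma)}(F_\bma)$ from below: Hausdorff measure is an infimum over \emph{all} countable covers, and producing one family of many small pieces only bounds box-counting quantities (a set can admit $\sim p\,n^{ps}$ pieces at scale $n^{-p}$ for every $p$ and still have $H^{s}=0$). To force $H^{s}(F_\bma)=\infty$ you must attach to each piece a definite amount of $s$-measure and then add these amounts over a family that is actually pairwise disjoint. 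This is precisely how the paper argues: the pieces are scaled copies of the fractal cube $Q_{\bmc'}$ (each of measure $n^{-ps}H^{s}(Q_{\bmc'})$ with $H^{s}(Q_{\bmc'})>0$), disjointness is \emph{proved}, not asserted, via the nested neighbourhoods $P_\bma^{(m)}=\hT^m(P_\bma)$ and the inclusions $T_0^m(B)\subset P_\bma^{(m+1)}\setminus P_\bma^{(m)}$, and the case of two comparable maximal vertices joined by a longer directed path is reduced to the adjacent case by passing to the $p$-th refinement (Lemma \ref{Dap}), so that the connecting path becomes a single nonempty $G^{(p)}_{0\bma}$. Without a per-piece measure bound and a disjointness argument of this kind, the necessity half of the finiteness criterion is not established.

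The rest of your plan is workable but takes a different route from the paper, and one statement needs repair: ``each such product is at most $\nu(\bma)^p$'' is false as written, since $\nu(\bma)$ only bounds the loop sets $G_\bmb$, while the transition sets $G_{\bmb\bmg}$ ($\bmb\sqsubset\bmg$) can be much larger; your own observation that a strictly increasing $\sqsubset$-chain has length at most $k$ fixes this up to a bounded factor, and with the antichain hypothesis at most one loop block along any path runs at full rate $\nu(\bma)$, so the level-$p$ covering sums stay bounded and yield $\dim_H F_\bma\le s(\bma)$ together with finiteness -- but this counting lemma, which you yourself flag as the main obstacle, is exactly what you leave unproven. The paper avoids path counting altogether: it iterates only the loop at $\bma$ to get $F_\bma=Q_\bma\cup\bigcup_{m\ge 0}T^m_{\bma\bma}(B)$, obtains the dimension by a minimal-element induction over the finite graph, and gets finiteness from the geometric series $H^{s}(F_\bma)\le h\,\nu(\bma)/(\nu(\bma)-\#G_\bma)$ applied from the maximal vertices downward. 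Your lower bound and positivity argument (copies of $Q_{\bmc}$ pushed into $F_\bma$ by the dimension-preserving operators) coincides with the paper's.
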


{\bf Proof.} If we put $B=\bigcup\limits_{\bmb\succ\bma}T_{\bma\bmb}(F_\bmb)$, the equation \eqref{perall} becomes 
\beq F_\bma=T_{\bma\bma}(F_\bma)\cup B\eeq Iterating the last equation and keeping in mind that $\lim\limits_{m\to\8}T^m_{\bma\bma}(F_\bma)=Q_\bma$, we obtain that \beq\label{FviaB} F_\bma=Q_\bma\cup \bigcup\limits_{m=0}^\8T_{\bma\bma}^m(B)\eeq

This equality implies that $\dim_HF_\bma=\max(\dim_HQ_\bma,\dim_HB)$.
All non-degenerate operators $T_{\bma\bmb}$ preserve the dimension, that is, $\dim_H T_{\bma\bmb}(F_\bmb)=\dim_H F_\bmb$. It follows that
$\dim_HB=\max\limits_{\bmb\succ\bma}\dim_HF_\bmb$, hence
$\dim_HF_\bma=\max(\dim_HQ_\bma,\max\limits_{\bmb\succ\bma}\dim_HF_\bmb)$.

Suppose now that $\dim_HF_\bma=s'>s(\bma)$. Then $\dim_HB=s'$, hence
there is $\bmb\succ\bma$ such that $\dim_HF_\bmb=s'$.
Applying this argument to the set $F_\bmb$, we see that the set of all $\bmb\succ\bma$, such that $\dim_HF_\bmb=s'$, cannot contain a minimal element. This is impossible because it is finite. 

Consequently, $\dim_HF_\bma=s(\bma)$. \\

If $\bmb\succ\bma$ and $\#G_\bmb=\nu(\bma)$, then
$H^{s(\bma)}(Q_\bmb)$ is positive,   hence $H^{s(\bma)}(F_\bma)$ is also positive.

If $\#G_\bma>\#G_\bmb$ for any $\bmb\succ\bma$, then $H^s(Q_\bmb)=0$ for all $\bmb\succ\bma$, therefore, $H^s(F_\bma)=H^s(Q_\bma)$ is finite and positive.\\
This completes the proof of the first statement of Theorem \ref{dimthm}.\\

Suppose now $\#G_\bma<\nu(\bma)$ and $H^s(B)=h>0$. Then $H^s(T^m_\bma(B))=\dfrac{(\#G_\bma)^m h}{\nu(\bma)^m}$. Taking a sum over all $m$ from 0 to infinity, we obtain
$H^s(F_\bma)\leq \dfrac{h\nu(\bma)}{\nu(\bma)-\#G_\bma}$, hence $H^s(F_\bma)$ is positive and finite.\\

Now we show that if for some $\bmb\succ\bma$, $\#G_\bmb=\#G_\bma=\nu(\al)$, then
$H^s(F_\bma)$ is infinite. This will be proved in the following lemma, which finalizes the proof of the theorem.\\

 On account of Theorem\ref{bma0},   it      sufficient to consider the situation where $\bma=0$.\\

\begin{lem}
If $\#G_0=\#G_\bma=l$, $\log_nl=s$ and for any $\bmb\succ 0$, $\#G_\bmb\le l$. Then
$H^s(F_0)$ is infinite.
\end{lem}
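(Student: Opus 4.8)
The plan is to locate inside $F_0$ an infinite union of scaled copies of one fixed set of positive finite $H^s$-measure, placed so that the copies are nearly disjoint, and then to read off $H^s(F_0)=\8$ by counting mass.

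First I would unfold $F_0$ via \eqref{FviaB} as $F_0=Q_0\cup\bigcup_{m\ge0}T_{00}^m(B)$, where $B=\bigcup_{\bmc\succ0}T_{0\bmc}(F_\bmc)$ and $T_{00}$ is the union of the $l=n^s$ homotheties $x\mapsto\frac1n(x+g)$, $g\in G_0$; these satisfy the open set condition with $(P^k)^\circ$. Since $\bma\succ0$, I fix a directed path $0=\bma_0\sqsubset\bma_1\sqsubset\cdots\sqsubset\bma_p=\bma$ in $\Ga_\Sa$, pick one digit $g_i\in G_{\bma_{i-1}\bma_i}$ on each of its edges, and let $\phi$ be the corresponding single homothety of ratio $n^{-p}$; then $F_0\supseteq\phi(F_\bma)\supseteq\phi(Q_\bma)=:W$. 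Because $\#G_\bma=l=n^s$, the cube $Q_\bma$ is a self-similar set satisfying the open set condition, so $0<H^s(Q_\bma)<\8$ and hence $0<H^s(W)<\8$. From $F_0\supseteq T_{00}^m(F_0)\supseteq T_{00}^m(W)$ I obtain $F_0\supseteq\bigcup_{m\ge0}T_{00}^m(W)$, and this is the union I will work with.

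Next come two estimates. Since $T_{00}^m$ is a union of $l^m$ homotheties of ratio $n^{-m}$ with pairwise disjoint images of $(P^k)^\circ$, and $l^m(n^{-m})^s=1$, every level carries mass at least $c:=H^s(W\cap(P^k)^\circ)$, independent of $m$. For the overlap between levels I would write $T_{00}^{m+r}(W)=\bigcup_{|v|=m}\psi_v\big(T_{00}^r(W)\big)$ and, using the same disjointness, bound $H^s\big(T_{00}^m(W)\cap T_{00}^{m+r}(W)\big)$ by $H^s\big(W\cap T_{00}^r(P^k)\big)$ (plus a boundary term that vanishes in the generic case below); since $T_{00}^r(P^k)$ decreases to $Q_0$ and $H^s(W)<\8$, this upper bound decreases to $H^s(W\cap Q_0)$ as $r\to\8$. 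Granting for the moment that $c>0$ and $H^s(W\cap Q_0)=0$, I can choose levels $m_0<m_1<m_2<\cdots$ growing fast enough that $\sum_{u\ge1}\sum_{t<u}H^s\big(T_{00}^{m_t}(W)\cap T_{00}^{m_u}(W)\big)\le c/2$, and then the degree-two Bonferroni lower bound gives $H^s(F_0)\ge H^s\big(\bigcup_{u\le N}T_{00}^{m_u}(W)\big)\ge(N+1)c-c/2\to\8$.

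It remains to justify $c>0$ and $H^s(W\cap Q_0)=0$, and this is where the difficulty lies. The set $W=\phi(Q_\bma)$ sits in a single $n^p$-adic subcube $\phi(P^k)$, so $W\cap\dd P^k$ and $W\cap Q_0$ are contained in faces of that subcube --- hence in $\phi$-images of faces of $Q_\bma$ and, once $\phi(P^k)$ is located among the cells of $Q_0$, of faces of $Q_0$. By Theorem \ref{bma0} a face of $Q_\bma$ (or of $Q_0$) is again a fractal cube whose digit set has at most $l$ elements, so its dimension is at most $s$, and it is $H^s$-null unless its digit set still has exactly $l$ elements, i.e.\ unless $Q_\bma$ (resp.\ $Q_0$) degenerates into a proper face. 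When such a degeneracy occurs I would either replace $W$ by a sub-copy $\psi_v\phi(Q_\bma)$ lying in a more favourable cell of $Q_0$, or, when no cell works, restrict the whole configuration to the corresponding coordinate subspace through Theorem \ref{bma0}, which lowers $k$ and lets the lemma follow by induction on $k$ with an elementary base $k=1$. The main obstacle is exactly this last dichotomy: ensuring that one can always either dodge the face overlaps by a good choice of copy or else legitimately peel off a dimension --- note a $(k-1)$-face is not $H^s$-null when $s\le k-1$ --- and that the reduced configuration again satisfies the hypotheses of the lemma, so that the induction closes.
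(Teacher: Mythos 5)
Your overall strategy --- embedding a scaled copy of $Q_\bma$ into $F_0$ along a directed path and then iterating the $G_0$-part $T_{00}$ of the system to produce infinitely many copies of comparable $H^s$-mass --- is the same as the paper's, and the embedding step is sound: composing one digit per edge gives a homothety $\phi$ of ratio $n^{-p}$ with $\phi(Q_\bma)\IN\phi(F_\bma)\IN F_0$ (the paper does the equivalent thing by passing to the $p$-th refinement, where Lemma \ref{Dap} guarantees $G^{(p)}_\bma=(G_\bma)^{(p)}$, so the whole path collapses to a single edge $G^{(p)}_{0\bma}\neq\0$). The genuine gap is in the counting part, and you flag it yourself: your Bonferroni scheme needs $c=H^s(W\cap(P^k)^\circ)>0$, needs $H^s(W\cap Q_0)=0$, and needs the cross-cell ``boundary terms'' to vanish, and none of these is established. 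Note that this is not a borderline technicality: since $Q_\bma\IN P_\bma$, the copy $W=\phi(Q_\bma)$ \emph{always} lies on a face of the cell $\phi(P^k)$, so mass of $W$ sitting on cell boundaries is the typical situation rather than a degenerate one, and your proposed escapes (move to a more favourable cell, or induct on $k$) are precisely the unverified steps; as written the proof does not close.

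What is missing is the paper's geometric disjointness device, which makes all overlap estimates unnecessary. After refining so that $G_{0\bma}\neq\0$, choose $d_1\in G_{0\bma}$ and set $B=\frac1n(d_1+Q_\bma)\IN F_0$. Because $Q_\bma\IN P_\bma$, while $d_1+\bma\in D_2$ forces $1\le d_{1,i}\le n-1$ for $\al_i=-1$ and $d_{1,i}\le n-2$ for $\al_i=1$, the $\bma$-face of the cell $\frac1n(d_1+P^k)$ lies on no integer translate of the affine hull of $P_\bma$; hence $B\IN \hT(P_\bma)\mmm P_\bma$ and, more importantly, $B$ misses every set $P_\bma+v$, $v\in\zz^k$. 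Writing $P_\bma^{(m)}=\hT^m(P_\bma)$, this yields $T_{00}^m(B)\IN\hT^m(B)\IN P_\bma^{(m+1)}\mmm P_\bma^{(m)}$, and these shells are pairwise disjoint, so the levels never meet --- no matter where $W$ sits relative to $\dd P^k$ or to $Q_0$. Within one level the $l^m$ copies of $B$ are also disjoint (B lies in a cube of side $1/n<1$, so distinct integer translates cannot meet), giving the exact equality $H^s(T_{00}^m(B))=l^m n^{-ms}H^s(B)=H^s(B)>0$, and summing over the disjoint levels gives $H^s(F_0)=\8$. This positioning argument is the essential idea that replaces both of your unproved hypotheses.
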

\begin{proof}

 {\bf Case 1}: There is
$d_1\in D_1$ such that $d_1+\bma\in D_2$.\\

 Consider fractal cubes $Q_0$ and $Q_\bma$. Note that both sets $Q_0$ and $Q_\bma$ have finite positive measure in dimension $s$. 
 
 Consider a set $Q_*$ defined by equation $ Q_*= \dfrac{Q_*+G_0}n\bigcup\dfrac{d_1+Q_\bma}n$. Define the operator $T_0$ by the equality $T_0(A)= \dfrac{A+G_0}n$. Let $B=\dfrac{d_1+Q_\bma}n$.
 
 Let $\hT$ be the operator defined by the equality $\hT(A)= \dfrac {I^k+A}n$ whose attractor is the whole cube $P$.
 
  Consider the sets $P_\bma^{(m)}=\hT^m(P_\bma)$. Inclusion $P_\bma\IN\hT(P_\bma) $ implies that these sets form an increasing nested sequence $P_\bma\IN P_\bma^{(1)}\IN P_\bma^{(2)}\ldots$.
  
   Note that $B\IN P_\bma^{(1)}\mmm P_\bma$ and $T_0^m(B)\IN \hT^m(B)\IN P_\bma^{(m+1)}\mmm P_\bma^{(m)}$. The sets $P_\bma^{(m+1)}\mmm P_\bma^{(m)}$ are disjoint, and thus the sets $T_0^m(B)$ are disjoint, too.
 
On the other hand, since $\#G_0=\#G_\bma=l$, $H^s(T_0^m(B))=H^s(B)$. Consequently, the set $\bigcup\limits_{m=0}^\8 T_0^m(B)$ has an infinite Hausdorff measure in dimension $s$.\\
 
 {\bf Case 2.}   There is a sequence $0\prec\bma_1\prec\ldots\prec\bma_{p}$ such that $\bma_p=\bma$ and\\
  $\#G_{0\bma_1}\#G_{\bma_1\bma_2}\ldots\#G_{\bma_{p-1}\bma_p}\neq 0$.
  %Denote $\bmg_1=\bma_1, \bmg_i=\bma_i-\bma_{i-1}$. Then for any non-equal $i,j\in\{1,\ldots,p\}$, $\bmg_i\perp\bmg_j$.
  Then for each $i\in\{1,\ldots,p\}$ there are $d_i\in D_1$ and $\da_i\in D_2$ such that $\da_i=d_i-n\bma_{i-1}+\bma_i$.
 Then \[\sum\limits_{i=1}^p n^{p-i}\da_i=\sum\limits_{i=1}^p n^{p-i}(d_i-n\bma_{i-1}+\bma_i)=\sum\limits_{i=1}^p n^{p-i} d_i +\bma_p\] That is, $\bm{\da}_{i_1\ldots i_p}-\bm{d}_{i_1\ldots  i_p}=\bma_p$.
 This means that if we consider $K_1$ and $K_2$ as fractal cubes of the order $n^p$ with digit sets $D_1^{(p)}$ and $D_2^{(p)}$, respectively, then the set $G^{(p)}_{0\bma}$ is nonempty. 
 At the same time, the equalities $D^{(p)}_{1,\bma}= D^{(p)}_1\cap(n^p-1)P_\bma=(D_{1,\bma})^{(p)}$ and $D^{(p)}_{2,-\bma}= D^{(p)}_2\cap(n^p-1)P_{-\bma}=(D_{2,-\bma})^{(p)}$ imply that $G^{(p)}_\bma=(G_\bma)^{(p)}$. Then it follows that $\#G^{(p)}_\bma=l^p$ and $\log_{n^p}\#G^{(p)}=s$. Hence, the fractal cube of order $n^p$ with the digit set  $G^{(p)}_\bma$ coincides with $Q_\bma$.
  
  This situation is the same as in Case 1, therefore, the measure $H^s(F_0)$ is infinite.
 \end{proof}
 
 \subsection{The cardinality of the set $F_\al$.}
\begin{thm}
1.The set $F_\bma$ is a singleton if  $\Gamma_\bma$ is a chain $\bma=\bma_1\prec\ldots\prec\bma_p$ in which  for all $j\le p-1$,
$\# G_{\bma_j\bma_{j+1}}=1$, $G_{\bma_j}=\0$ and $\#G_{\bma_p}=1$.\\
2. The set $F_\bma$ is finite if for all maximal vertices $\bmb$ in $\Gamma_\bma$, $\#G_\bmb=1$ and $G_{\bmb}=\0$ for all other vertices in $\Gamma_\bma$.
In this case $\#F_\bma$ is equal to the sum of all compositions $\prod \limits_{j=1}^{p-1}\# G_{\bma_j\bma_{j+1}}$ taken over all chains $\bma=\bma_1\prec\ldots\prec\bma_p=\bmb$, where $\bmb$ is maximal in $\Gamma_\bma$;\\
3. The set $F_\bma$ is countable if for all vertices $\bmb$ in $\Gamma_\bma$, $\#G_\bmb\le 1$.\\
4. The set $F_\bma$ is uncountable if there is a vertex $\bmb$ in $\Gamma_\bma$, such that $\#G_\bmb> 1$.
\end{thm}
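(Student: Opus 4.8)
The plan is to analyze the set $F_\bma$ through the representation \eqref{FviaB}, namely $F_\bma = Q_\bma \cup \bigcup_{m=0}^\infty T_{\bma\bma}^m(B)$ with $B = \bigcup_{\bmb\succ\bma} T_{\bma\bmb}(F_\bmb)$, combined with induction on the structure of the subgraph $\Ga_\bma$. Since $\Ga_\bma$ is finite and acyclic once we pass to the $\prec$ relation (loops excluded), we may induct on the maximal length of a directed path starting at $\bma$. The base case is when $\bma$ is maximal in $\Ga_\bma$, so $F_\bma = Q_\bma$; then $\#F_\bma = \#G_\bma$ if $G_\bma\neq\0$ gives a singleton exactly when $\#G_\bma=1$, is finite (in fact a single point) under the stated hypothesis, is countable when $\#G_\bma\le 1$, and is uncountable (a self-similar set on $\ge 2$ maps with the open set condition failing only mildly, or more simply a nonempty perfect set) when $\#G_\bma>1$.

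For the inductive step I would treat each clause in turn. For clause 1: if $\Ga_\bma$ is a chain with all edge-sets $G_{\bma_j\bma_{j+1}}$ singletons, all loop-sets $G_{\bma_j}$ empty for $j\le p-1$, and $\#G_{\bma_p}=1$, then $Q_\bma=\0$ (since $G_\bma=G_{\bma_1}=\0$), so $F_\bma = \bigcup_m T_{\bma\bma}^m(B)$; but $G_{\bma\bma}=G_\bma=\0$ means $T_{\bma\bma}$ is degenerate, so actually $F_\bma = B = T_{\bma_1\bma_2}(F_{\bma_2})$, a single homothetic copy of $F_{\bma_2}$, which by induction is a singleton. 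For clause 2: $Q_\bma=\0$ again at every non-maximal vertex, so $F_\bma = B = \bigcup_{\bmb\colon \bma\prec\bmb} T_{\bma\bmb}(F_\bmb)$; unwinding the recursion along all chains, each maximal $\bmb$ contributes a point (base case), and the number of homothetic images of that point reaching $F_\bma$ is $\prod_{j=1}^{p-1}\#G_{\bma_j\bma_{j+1}}$ summed over chains — this is just counting paths in the graph weighted by edge multiplicities, and finiteness of the vertex set guarantees the total is finite. For clause 3: each $Q_\bmb$ has $\#G_\bmb\le1$, hence is empty or a single point, so countable; the countable union $F_\bma = Q_\bma\cup\bigcup_m T_{\bma\bma}^m(B)$ of countably many points (by induction each $F_\bmb$ is countable, $B$ is a finite union of their images, each $T_{\bma\bma}^m(B)$ countable) is countable.

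For clause 4: if some vertex $\bmb$ in $\Ga_\bma$ has $\#G_\bmb>1$, then $Q_\bmb$ is a fractal cube with digit set of size $\ge2$, hence an infinite (indeed uncountable) self-similar set — a nonempty compact perfect set, so of cardinality continuum. By \eqref{FviaB} applied at $\bmb$, $Q_\bmb \subseteq F_\bmb$, so $F_\bmb$ is uncountable; and since $\bmb\succcurlyeq\bma$, there is a directed path $\bma=\bma_0\prec\cdots\prec\bma_q=\bmb$, and the composition $T_{\bma_0\bma_1}\circ\cdots\circ T_{\bma_{q-1}\bma_q}$ maps a piece of $F_\bmb$ injectively into $F_\bma$ (each $T$ is a nonempty union of homotheties, so at least one homothetic branch sends an uncountable subset of $F_\bmb$ into $F_\bma$), forcing $F_\bma$ uncountable.

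The main obstacle I anticipate is clause 4, specifically the bookkeeping needed to push an uncountable subset of $F_\bmb$ forward into $F_\bma$: the operator $T_{\bma_{j}\bma_{j+1}}$ is a union $\frac1n(\,\cdot\,+G_{\bma_j\bma_{j+1}})$ over possibly several digits, and one must check that the composite image genuinely lands inside $F_\bma$ (using \eqref{perall}) and that at least one branch is injective on an uncountable set — which follows because each individual homothety is injective and $G_{\bma_j\bma_{j+1}}\neq\0$ along the path. A secondary subtlety is clause 2's path-counting identity: one must verify that distinct chains produce disjoint families of limit points, or at least that no cancellation occurs, so that the sum of products genuinely equals $\#F_\bma$; this is where the hypothesis $G_{\bmb}=\0$ at non-maximal vertices is essential, since it kills the loop contributions $Q_{\bmb}$ that would otherwise create overlaps.
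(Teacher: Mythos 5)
A point of context first: the paper states this theorem without any proof, so there is no authors' argument to compare against; your route --- iterating the graph-directed system \eqref{perall}, using the decomposition \eqref{FviaB}, and inducting on the length of directed paths in $\Ga_\bma$ --- is the natural one and is evidently what the authors intend. Within that route, your treatment of clause 1 (a chain of single-digit homotheties applied to the singleton $Q_{\bma_p}$), of clause 3 (a countable union of countable sets, since every $Q_\bmb$ is at most one point and each $T_{\bma\bma}^m(B)$ is a finite union of homothetic copies), and of clause 4 is correct. In fact clause 4 is easier than the obstacle you anticipate: choose \emph{one} digit from each nonempty $G_{\bma_j\bma_{j+1}}$ along a directed path from $\bma$ to $\bmb$; the resulting composition of single homotheties is injective on all of $F_\bmb$ and maps it into $F_\bma$ by \eqref{perall}, so the uncountable set $Q_\bmb\subseteq F_\bmb$ injects into $F_\bma$ with no bookkeeping about branches.

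The genuine gap is the exact count in clause 2. Unwinding the recursion only shows that $\#F_\bma$ is \emph{at most} the sum over chains of $\prod_j\#G_{\bma_j\bma_{j+1}}$; equality requires that distinct branches yield distinct points, i.e.\ that no $y\in F_\bma$ admits two representations $ny=x+d=x'+d'$ with $(d,\bmb)\neq(d',\bmb')$, $x\in F_\bmb$, $d\in G_{\bma\bmb}$, $x'\in F_{\bmb'}$, $d'\in G_{\bma\bmb'}$, and likewise at every intermediate vertex. Your stated reason --- that $G_\bmb=\0$ at non-maximal vertices ``kills the loop contributions $Q_\bmb$'' --- does not address this: the potential overlaps come from distinct edges, or distinct digits on one edge, not from loops. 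What is missing is a disjointness lemma of the following type: if such a double representation existed, then (setting $\bmg=d'-d$ and using $F_\bmb\subseteq P_\bmb$, which forces $\bmg\in A_k$, $\bmg\perp\bmb$, and places $x,x'$ on opposite subfaces $P_{\bmb\pm\bmg}$) one produces either two distinct digits in some $G_{\bmb''}$ at a vertex $\bmb''$ of $\Ga_\bma$, or a nonempty $G$ at a non-maximal vertex, contradicting the hypotheses. For instance, for fractal segments a coincidence between the $\bmb=1$ and $\bmb=-1$ branches of $F_0$ forces an element of $G_0=D_1\cap D_2$, which the hypothesis excludes. Until this uniqueness-of-address argument is supplied, your clause 2 establishes finiteness and an upper bound for $\#F_\bma$, but not the stated equality.
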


\begin{cor}A fractal cube $K$ possesses one-point intersection  property iff the structure graph $\Gamma(\Sa)$ is a union of chains $0\prec\bma_{i1}\prec\ldots\prec\bma_{ip_i}$ for which all $\bma_{ij}$ are different and such that for all $i$ $\#G_{\bma_{ip_i}}=1$ and for all $i,j$ for which $j\le p_i-1$,
$\# G_{\bma_{ij}\bma_{i,j+1}}=1$, $G_{\bma_{ij}}=\0$.

A fractal cube $K$ has a finite intersection property iff for all maximal $\bma$ in the graph $\Ga(\Sa)$, $\#G_\bma=1$ and for all other $\bma\neq 0$, $\#G_\bma=0$.  

\end{cor}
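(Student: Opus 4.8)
The plan is to deduce both assertions from the preceding theorem through a single reduction, which I will call the \emph{dictionary}: \emph{$K$ has the one-point (resp. finite) intersection property if and only if every set $F_\bma$ with $0\prec\bma$ is a singleton (resp. a finite set)}. To prove the dictionary I would start from $K=\bigcup_{\bi\in I^p}S_\bi(K)$ and observe that for $\bi,\bj\in I^p$ one has $S_\bi(K)\cap S_\bj(K)=\frac{1}{n^p}\bigl((K+\bd_\bi)\cap(K+\bd_\bj)\bigr)$, which is empty unless $\bma:=\bd_\bj-\bd_\bi\in A_k$ and in that case equals the similarity copy $\frac{1}{n^p}(F_\bma+\bd_\bi)$ of $F_\bma$ (taking $K_1=K_2=K$, so $F_\bma=K\cap(K+\bma)$). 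Intersections of cylinders of unequal length reduce, after subdividing the shorter one, to equal length, and equal-length intersections reduce, via the first index at which $\bi$ and $\bj$ differ, to the case $p=1$; moreover, since $d\in D\cap(D-\bma)$ exactly when $d,d+\bma\in D$, the vectors $\bma=d_j-d_i$ that actually arise are precisely the direct successors of $F_0$ in $\Ga_\Sa$. Finally I would invoke Theorem~\ref{IFC} to note that every vertex of the component $\Ga_0$ of $F_0$ occurs among the equations defining these particular $F_\bma$, so that finiteness (resp. being a singleton) of all direct successors of $F_0$ propagates to every vertex $\bmb$ with $0\prec\bmb$. This establishes the dictionary.

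Next I would apply the preceding theorem to the subgraph $\Ga_\bma$ hanging off each direct successor $\bma$ of $F_0$. Its first part already gives the ``if'' direction: if $\Ga_\bma$ is a chain $\bma=\bma_1\prec\ldots\prec\bma_p$ with $\#G_{\bma_j\bma_{j+1}}=1$ and $G_{\bma_j}=\0$ for $j<p$ and $\#G_{\bma_p}=1$, then $F_\bma$ is a singleton; similarly its second part handles finiteness. For the converse I would argue that if $F_\bma$ is a singleton then no vertex $\bmb\succcurlyeq\bma$ can have $\#G_\bmb\ge 2$ (by the fourth part $F_\bma$ would then contain the uncountable cube $Q_\bmb$), no non-terminal vertex can carry a loop, and no vertex can have two outgoing edges --- each of the last two would put a second point into $F_\bma$ --- so $\Ga_\bma$ is forced to be a chain of the stated shape; the finite case is analogous, with ``singleton'' replaced by ``finite'' and ``chain'' by the condition that every maximal vertex of $\Ga_\bma$ has $\#G=1$ and every other vertex has $G=\0$.

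To finish I would globalize: $\Ga_\Sa$, once reduced to the part reachable from $F_0$ (the only part that records self-intersections of $K$), is $F_0$ with its loop $G_0=D$ together with the subgraphs $\Ga_{\bma_{i1}}$ over the direct successors $\bma_{i1}$ of $F_0$, so the two local characterizations above translate into the two global statements of the corollary. I expect the main obstacle to be precisely this last, purely combinatorial step: making rigorous in what sense the chains $0\prec\bma_{i1}\prec\ldots\prec\bma_{ip_i}$ may be taken vertex-disjoint (so that the $\bma_{ij}$ are pairwise distinct), and ruling out the degenerate possibility that a loop at a non-terminal vertex contributes no new point. The conceptual content, by contrast, is carried entirely by the dictionary and by the theorem preceding the corollary. $\square$
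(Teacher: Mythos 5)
Your route is the intended one: the paper offers no separate argument for this corollary, which is meant to fall out of the preceding cardinality theorem exactly through the reduction you call the dictionary, namely $S_i(K)\cap S_j(K)=\frac1n\bigl(F_{d_j-d_i}+d_i\bigr)$ whenever $d_j-d_i\in A_k$, so that the one-point (resp.\ finite) intersection property is equivalent to every $F_\bma$ with $0\prec\bma$ being a singleton (resp.\ finite); and your restriction of $\Ga_\Sa$ to the part reachable from $F_0$ is not cosmetic but a necessary reading of the statement, since vertices not reachable from $F_0$ say nothing about self-intersections of $K$. The ``if'' halves, via parts 1 and 2 of the theorem, are fine.

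The necessity halves are where your sketch has genuine gaps, and they sit exactly at the two places you defer. First, the assertion that a loop at a non-maximal vertex ``puts a second point into $F_\bma$'' is the entire content of the necessity of $G_{\bma_{ij}}=\0$ (and, for the finite property, of $G_\bmb=\0$ at non-maximal $\bmb$), and it genuinely needs a proof: one must show that the fixed point $c_g=\frac{g}{n-1}$ of $T_{\bmb\bmb}$, $g\in G_\bmb$, cannot lie in $T_{\bmb\bmg}(F_\bmg)$ for $\bmg\sqsupset\bmb$. This is true, but only via a separation argument in the spirit of Proposition~\ref{c0inK}: $c_g\in\frac1n(F_\bmg+d)$ forces $d=g$, and then $c_g\in F_\bmg\IN P_\bmg$ is impossible, because at a coordinate $i$ with $\be_i=0$, $\ga_i\neq0$ the membership $g\in G_{\bmb\bmg}$ forces $g_i\le n-2$ (resp.\ $g_i$ at least $1$), while $c_g\in P_\bmg$ would force $g_i=n-1$ (resp.\ $g_i=0$); alternatively one can reuse the nested-sets argument $T^m_{\bmb\bmb}(B)\IN P_\bmb^{(m+1)}\mmm P_\bmb^{(m)}$ from the lemma inside the proof of Theorem~\ref{dimthm}. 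With this in hand the loop in fact produces infinitely many points, which is exactly what the finite-intersection statement needs; as written, this key step is only announced.

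Second, the globalization step as you plan it would fail: the one-point property does not make the chains through different direct successors of $F_0$ vertex-disjoint, because in-branching is harmless --- two distinct singleton vertices may feed into one and the same terminal vertex. For instance, for $n=4$ and $D=\{(0,0),(3,3),(3,1),(0,2),(1,3),(2,0),(1,1),(2,1)\}$ one has $F_{(1,0)}=\{(1,\tfrac12)\}$ and $F_{(0,1)}=\{(\tfrac12,1)\}$, each joined by an edge to $F_{(1,1)}=\{(1,1)\}$, all three reachable from $F_0$, while $K$ has the one-point intersection property. So ``all $\bma_{ij}$ are different'' cannot be forced in the strict sense; the corollary has to be read with chains allowed to share vertices, and correspondingly your claim that a vertex with two outgoing edges always acquires a second point (used to force each $\Ga_\bma$ to be a chain) needs either a genuine non-coincidence argument for the two contributed points or to be dropped in favour of this weaker reading. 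A complete proof must settle the first point and reformulate the second; the remainder of your outline is sound and is the natural derivation from the theorem.
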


\end{document}